\newcommand{\bburl}[1]{\textcolor{blue}{\url{#1}}}
\newcommand{\monthyear}[1]{%
  \def\@monthyear{\uppercase{#1}}}
\newcommand{\volnumber}[1]{%
  \def\@volnumber{\uppercase{#1}}}
\newcommand{\journal}[1]{
    \def\@journal{\uppercase{#1}}
}
\def\ps@plain{\ps@empty
  \def\@oddfoot{\@monthyear \hfil \thepage}%
  \def\@evenfoot{\thepage \hfil \@volnumber}}
\def\ps@firstpage{\ps@plain}
\def\ps@headings{\ps@empty
  \def\@evenhead{%
    \setTrue{runhead}%
    \def\thanks{\protect\thanks@warning}%
    \@journal\hfil}%
  \def\@oddhead{%
    \setTrue{runhead}%
    \def\thanks{\protect\thanks@warning}%
    \hfill\uppercase{The Generalized Bergman Game}}%
  \let\@mkboth\markboth
  \def\@evenfoot{%
    \thepage \hfil \@volnumber}%
  \def\@oddfoot{%
    \@monthyear \hfil \thepage}%
  }%
\theoremstyle{plain} 
\numberwithin{equation}{section}
\newtheorem{thm}{Theorem}[section]
\newtheorem{theorem}[thm]{Theorem}
\newtheorem{conjecture}[thm]{Conjecture}
\newtheorem{prop}[thm]{Proposition}
\newtheorem{porism}[thm]{Porism}
\newtheorem{lemma}[thm]{Lemma}
\newtheorem{cor}[thm]{Corollary}
\newtheorem*{notate}{Notation}
\newtheorem*{rmk}{Remark}
\theoremstyle{definition}
\newtheorem{defn}[thm]{Definition}
\newtheorem{example}[thm]{Example}
\newcommand{\notdone}[1]{\todo[inline, color=red!40]{#1}}
\providecommand\@dotsep{5}
\def\listtodoname{TODOS: Not Done (Red), Cite (Orange), Blue (Notation)}
\def\listoftodos{\@starttoc{tdo}\listtodoname}
\newcommand{\val}{v}
\newcommand{\valt}{\widetilde{v}}
\newcommand{\valtabs}{{\widetilde{v}_+}}
\newcommand{\chips}[1]{\abs{#1}}
\newcommand{\ind}{\mathcal{I}nd}
\newcommand{\len}{\ell}
\newcommand{\gap}{g}
\newcommand{\NIPLRS}{non-increasing PLRS\xspace}
\newcommand{\PLRSs}{PLRSs\xspace}
\newcommand{\PLRS}{PLRS\xspace}
\newcommand{\slcr}{\mathcal{S}_{\mathscr{L}}\mathcal{C}_{\mathscr{R}}}
\newcommand{\slcl}{\mathcal{S}_{\mathscr{L}}\mathcal{C}_{\mathscr{L}}}
\newcommand{\srcr}{\mathcal{S}_{\mathscr{R}}\mathcal{C}_{\mathscr{R}}}
\newcommand{\srcl}{\mathcal{S}_{\mathscr{R}}\mathcal{C}_{\mathscr{L}}}
\newcommand{\crsl}{\mathcal{C}_{\mathscr{R}}\mathcal{S}_{\mathscr{L}}}
\newcommand{\crsr}{\mathcal{C}_{\mathscr{R}}\mathcal{S}_{\mathscr{R}}}
\newcommand{\clsl}{\mathcal{C}_{\mathscr{L}}\mathcal{S}_{\mathscr{L}}}
\newcommand{\clsr}{\mathcal{C}_{\mathscr{L}}\mathcal{S}_{\mathscr{R}}}
\newcommand{\floor}[1]{\left\lfloor#1\right\rfloor}
\newcommand{\ceil}[1]{\left\lceil#1\right\rceil}
\newcommand{\abs}[1]{\left|#1\right|}
\newcommand{\N}{\mathbb N}
\newcommand{\Q}{\mathbb Q}
\newcommand{\R}{\mathbb R}
\newcommand{\Z}{\mathbb Z}
\def\dooutline{0}
\journal{{}}
\thanks{This work was supported by NSF Grants DMS1561945 and DMS1659037. This work was also supported by Williams College and the University of Michigan. We thank the participants of the 2021 Williams SMALL REU for many helpful conversations and the referee for constructive comments.}
\title{The Generalized Bergman Game}
\author{Benjamin Baily}
\email{\textcolor{blue}{\href{mailto:bmb2@williams.edu}{bmb2@williams.edu}}}
\address{Department of Mathematics and Statistics, Williams College, Williamstown MA 01267}
\author{Justine Dell}
\email{\textcolor{blue}{\href{mailto:jdell@haverford.edu}{jdell@haverford.edu}}}
\address{Department of Mathematics and Statistics, Haverford College, Haverford, PA 19041}
\author{Irfan Durmi\'{c}}
\email{\textcolor{blue}{\href{mailto:id5@williams.edu}{id5@williams.edu}}}
\address{Department of Mathematics and Statistics, Williams College, Williamstown MA 01267}
\author{Henry L. Fleischmann}
\email{\textcolor{blue}{\href{mailto:henryfl@umich.edu}{henryfl@umich.edu}}}
\address{Department of Mathematics, University of Michigan, Ann Arbor, MI 48109}
\author{Faye Jackson}
\email{\textcolor{blue}{\href{mailto:alephnil@umich.edu}{alephnil@umich.edu}}}
\address{Department of Mathematics, University of Michigan, Ann Arbor, MI 48109}
\author{Isaac Mijares}
\email{\textcolor{blue}{\href{mailto:rim1@williams.edu}{rim1@williams.edu}}}
\address{Department of Mathematics and Statistics, Williams College, Williamstown, MA 01267}
\author{Steven J. Miller}
\email{\textcolor{blue}{\href{mailto:sjm1@williams.edu}{sjm1@williams.edu}},  \textcolor{blue}{\href{Steven.Miller.MC.96@aya.yale.edu}{Steven.Miller.MC.96@aya.yale.edu}}}
\address{Department of Mathematics and Statistics, Williams College, Williamstown, MA 01267}
\author{Ethan Pesikoff}
\email{\textcolor{blue}{\href{mailto:ethan.pesikoff@yale.edu}{ethan.pesikoff@yale.edu}}}
\address{Department of Mathematics,
Yale University,
New Haven, CT 06520}
\author{Luke Reifenberg}
\email{\textcolor{blue}{\href{mailto:lreifenb@nd.edu}{lreifenb@nd.edu}}}
\address{Department of Mathematics, University of Notre Dame, Notre Dame, IN 46556}
\author{Alicia Smith Reina}
\email{\textcolor{blue}{\href{mailto:ags6@williams.edu}{ags6@williams.edu}}}
\address{Department of Mathematics and Statistics, Williams College, Williamstown, MA 01267}
\author{Yingzi Yang}
\email{\textcolor{blue}{\href{mailto:yyingzi@umich.edu}{yyingzi@umich.edu}}}
\address{Department of Mathematics, University of Michigan, Ann Arbor, MI 48109}
\date{\today}
\subjclass[2000]{11P99 (primary), 11K99 (secondary).}
\begin{document}


\begin{abstract}
    Every positive integer may be written uniquely as a base-$\beta$ decomposition--that is a legal sum of powers of $\beta$--where $\beta$ is the dominating root of a non-increasing positive linear recurrence sequence. Guided by earlier work on a two-player game which produces the Zeckendorf Decomposition of an integer (see \cite{baird-smithGeneralizedZeckendorfGame2019}), we define a broad class of two-player games played on an infinite tuple of non-negative integers which decompose a positive integer into its base-$\beta$ expansion. We call this game the Generalized Bergman Game. We prove that the longest possible Generalized Bergman game on an initial state $S$ with $n$ summands terminates in $\Theta(n^2)$ time, and we also prove that the shortest possible Generalized Bergman game on an initial state terminates between $\Omega(n)$ and $O(n^2)$ time. We also show a linear bound on the maximum length of the tuple used throughout the game.
\end{abstract}

\maketitle
\tableofcontents

\ifthenelse{\dooutline > 0}{
    \listoftodos
}{
\presetkeys{todonotes}{disable}{}
}

\notdone{Check Paper-y Things, such as Thanks, SubjClass, author emails etc}
\notdone{Fix Formatting}

\section{Introduction}\label{sec:intro}

\subsection{History and Motivation}\label{subs:history-motivation}

Every integer $n > 0$ can be written uniquely as a sum of non-adjacent Fibonacci numbers $\{F_n: F_1=1, F_2=2, F_{n+1}=F_n+F_{n-1}\}$, called its Zeckendorf decomposition. For example
\[
    2021 = 1597 + 377 + 34 + 13 = F_{15} + F_{13} + F_8 + F_6.
\]
Previous literature has extensively analyzed generalizations of this theorem to other recurrences using number theoretic and probabilistic techniques (see \cite{hoggattGeneralizedZeckendorfTheorem1972,kellerGeneralizationsZeckendorfTheorem,kologluNumberSummandsZeckendorf2010} and \cite{millerFibonacciNumbersCentral2012,kologluNumberSummandsZeckendorf2010,gapsCentralLimitTheorems2016,gapsSummandsLatticeZeck,summandsGapsGeneralizedZeck} respectively). Earlier work (see \cite{baird-smithZeckendorfGame2020}) analyzed a combinatorial game played on an infinite tuple of Fibonacci numbers; starting with $n$ copies of $F_1$ players alternate by using moves arising from the Fibonacci recurrence which over time consolidate the many original copies of $F_1$ into a few instances of larger $F_i$.  The winner is the player who moves last and consequently forms the Zeckendorf decomposition of $n$. For $n > 2$, the second player to move has a winning strategy (though the proof is non-constructive), and all games take on the order of $n$ moves.

Our work takes the Zeckendorf Game and removes its boundary conditions (special moves only allowed at the left edge); this new game--the Bergman Game\footnote{Named for George Bergman, who discovered base-$\varphi$ decompositions in \cite{bergmanNumberSystemIrrational1957}.} --is now played on a doubly infinite tape. Playing the Bergman game produces the unique base-$\varphi$ decomposition of $n$, where $\varphi$ is the golden mean. For example:
\[
    2021 = \varphi^{-16} + \varphi^{-11} + \varphi^{-6} + \varphi^{-3} + \varphi + \varphi^5 + \varphi^{10} + \varphi^{13} + \varphi^{15}.
\]
Previous research has extensively studied base-$\beta$ representations for any real number $\beta > 1$ via Ergodic theory, symbolic dynamics, and algebraic number theory (see \cite{VExpansionsSymbolicDynamics1989,dajaniSymbolicDynamicsExpansions, grabnerGeneralizedZeckendorfExpansions1994,frougnyFiniteBetaexpansions1992}).

We further define a Generalized Bergman Game played on any $\beta > 1$ which satisfies the characteristic polynomial of a Non-Increasing Positive Linear Recurrence Sequence (\NIPLRS). Our research thus presents a new way of studying these base-$\beta$ representations using classes of games similar to the Zeckendorf Game. For example, these games provide an elementary proof that any number which is a finite sum of powers of $\beta$ with non-negative coefficients has a finite base-$\beta$ decomposition (see \Cref{cor:finite-beta-decomposition}). Although we do not prove the complete known results in full generality, the corresponding literature uses ergodic measure theory rather than elementary methods \cite{frougnyFiniteBetaexpansions1992}. Our research analyzes the termination time of the Generalized Bergman Game, showing tight bounds on the number of moves in such games.

\subsection{Basic Definitions}\label{subs:basic-defns}

\begin{defn}\label{defn:bergman-game}
    The Bergman Game is a two player turn-based game played on a doubly infinite tape. A game state of the Bergman Game, denoted by $S$, is of the form $S = (\dots,S(-1), S(0), S(1), \dots)$, where for all the indices $i \in \Z$, $S(i) \in \Z_{\geq 0}$ and there is only a finite number of $i$ such that $S(i) \neq 0$. We say that $S$ has $S(i)$ ``summands'' in index $i$.

    The Bergman Game begins with some starting game state, and then two players take turns making moves, choosing either to split or combine. The combine and split moves are defined as follows.
    \begin{itemize}
        \item\label{combine} Combine: If both $S(i-2)\geq 1$ and $S(i-1)\geq 1$, then decrease $S(i-2),S(i-1)$ each by $1$ and increase $S(i)$ by $1$. E.g.,
            \[
                (1, 1,0) \rightarrow (0,0,1).
            \]
        \item\label{split} Split: If there exists an $i$ such that $S(i)\geq 2$, then decrease $S(i)$ by $2$ and increase $S(i-2)$ by $1$ and increase $S(i+1)$ by $1$. E.g.,
            \[
                (0,0,2,0) \rightarrow (1,0,0,1).
            \]
    \end{itemize}
    The game is played until one player can neither split nor combine, and the last player to make a move wins.

    The entire sequence of play (the ordered collection of the game states which appeared over the course of play) is called a ``game'', (usually) denoted by $G$.
\end{defn}

\begin{notate}
    For convenience, we often abbreviate the game state
    \[
        (\dots,0,0,0,S(a),S(a+1),\dots,S(b),0,0,0,0,\dots)
    \] as the finite tuple
    \[
        {}_{a}(S(a),\dots,S(b)){}_{b}
    \]
    where $a$ is the leftmost non-zero valued index and $b$ is the rightmost non-zero valued index. We often also use the notation ${}_aS_b$ to refer to a game state $S$ with leftmost nonzero index $a$ and rightmost nonzero index $b$. This provides a convenient way to notate shifting a game state; for example ${}_0S_{b - a}$ refers to the game state where we have shifted ${}_aS_b$ to have leftmost nonzero entry at the zeroth index. If there is only one index with a non-zero number of summands, we refer to the game state as ${}_a(n)$.
\end{notate}

\begin{example}
    We provide an example of two possible Bergman Games which share a common first four moves and then diverge from each other at move five (\Cref{fig:non-determ-bergman-game}). This shows that the Bergman Game is non-deterministic on the initial game state ${}_0(n)$ in general, as the game may be played down either shown path, each of which gives a different winner.
\end{example}

\begin{figure}[H]
    \centering
    \scalebox{0.7}{\begin{tikzpicture}[cell/.style={rectangle,draw=black},
space/.style={minimum height=0.7em,matrix of nodes,row sep=-\pgflinewidth,column sep=-\pgflinewidth,column 1/.style={font=\ttfamily}}, scale=0.5, every node/.style={transform shape}]
        \matrix (first) [space, column 1/.style={nodes={cell,minimum width=2em}}]
        {
            6 \\
        };
        \matrix (second) [below=of first, space, column 1/.style={nodes={cell,minimum width=2em}}, column 2/.style={nodes={cell,minimum width=2em}}, column 3/.style={nodes={cell,minimum width=2em}}, column 4/.style={nodes={cell,minimum width=2em}}]
        {
            1 & 0 & 4 & 1 \\
        };
        \draw[->, red] (first) -- (second);
        
        \matrix (thirdc) [below=of second, space, column 1/.style={nodes={cell,minimum width=2em}}, column 2/.style={nodes={cell,minimum width=2em}}, column 3/.style={nodes={cell,minimum width=2em}}, column 4/.style={nodes={cell,minimum width=2em}}, column 5/.style={nodes={cell,minimum width=2em}}]
        {
            1 & 0 & 3 & 0 & 1 \\
        };
        
        \draw[->, blue] (second) -- (thirdc);
        
        \matrix (fourthcs) [below=of thirdc, space, column 1/.style={nodes={cell,minimum width=2em}}, column 2/.style={nodes={cell,minimum width=2em}}, column 3/.style={nodes={cell,minimum width=2em}}, column 4/.style={nodes={cell,minimum width=2em}}, column 5/.style={nodes={cell,minimum width=2em}}]
        {
            2 & 0 & 1 & 1 & 1 \\
        };
        
        \draw[->, red] (thirdc) -- (fourthcs);
        
        \matrix (fifthcss) [below=of fourthcs, space, column 1/.style={nodes={cell,minimum width=2em}}, column 2/.style={nodes={cell,minimum width=2em}}, column 3/.style={nodes={cell,minimum width=2em}}, column 4/.style={nodes={cell,minimum width=2em}}, column 5/.style={nodes={cell,minimum width=2em}}, column 6/.style={nodes={cell,minimum width=2em}}, column 7/.style={nodes={cell,minimum width=2em}}]
        {
            1 & 0 & 0 & 1 & 1 & 1 & 1 \\
        };
        
        \draw[->, red] (fourthcs) -- (fifthcss);
        
        \matrix (sixthcsscr) [below right=1cm and -3cm of fifthcss, space, column 1/.style={nodes={cell,minimum width=2em}}, column 2/.style={nodes={cell,minimum width=2em}}, column 3/.style={nodes={cell,minimum width=2em}}, column 4/.style={nodes={cell,minimum width=2em}}, column 5/.style={nodes={cell,minimum width=2em}}, column 6/.style={nodes={cell,minimum width=2em}}, column 7/.style={nodes={cell,minimum width=2em}}, column 7/.style={nodes={cell,minimum width=2em}},  column 8/.style={nodes={cell,minimum width=2em}}]
        {
            1 & 0 & 0 & 1 & 1 & 0 & 0 & 1\\
        };
        
        \matrix (sixthcsscl) [below left=1cm and -3cm of fifthcss, space, column 1/.style={nodes={cell,minimum width=2em}}, column 2/.style={nodes={cell,minimum width=2em}}, column 3/.style={nodes={cell,minimum width=2em}}, column 4/.style={nodes={cell,minimum width=2em}}, column 5/.style={nodes={cell,minimum width=2em}}, column 6/.style={nodes={cell,minimum width=2em}}, column 7/.style={nodes={cell,minimum width=2em}}]
        {
            1 & 0 & 0 & 1 & 0 & 0 & 2 \\
        };
        
        \draw[->, blue] (fifthcss) -- (sixthcsscr);
        \draw[->, blue] (fifthcss) -- (sixthcsscl);
                
        \matrix (seventhcsscrc) [below=of sixthcsscr, space, column 1/.style={nodes={cell,minimum width=2em}}, column 2/.style={nodes={cell,minimum width=2em}}, column 3/.style={nodes={cell,minimum width=2em}}, column 4/.style={nodes={cell,minimum width=2em}}, column 5/.style={nodes={cell,minimum width=2em}}, column 6/.style={nodes={cell,minimum width=2em}}, column 7/.style={nodes={cell,minimum width=2em}}, column 7/.style={nodes={cell,minimum width=2em}},  column 8/.style={nodes={cell,minimum width=2em}}]
        {
            1 & 0 & 0 & 0 & 0 & 1 & 0 & 1\\
        };
        
        \matrix (seventhcsscls) [below=of sixthcsscl, space, column 1/.style={nodes={cell,minimum width=2em}}, column 2/.style={nodes={cell,minimum width=2em}}, column 3/.style={nodes={cell,minimum width=2em}}, column 4/.style={nodes={cell,minimum width=2em}}, column 5/.style={nodes={cell,minimum width=2em}}, column 6/.style={nodes={cell,minimum width=2em}}, column 7/.style={nodes={cell,minimum width=2em}},  column 8/.style={nodes={cell,minimum width=2em}}]
        {
            1 & 0 & 0 & 1 & 1 & 0 & 0 & 1 \\
        };
        
        \draw[->, blue] (sixthcsscr) -- (seventhcsscrc);
        \draw[->, red] (sixthcsscl) -- (seventhcsscls);
        
        \matrix (eighthcssclsc) [below=of seventhcsscls, space, column 1/.style={nodes={cell,minimum width=2em}}, column 2/.style={nodes={cell,minimum width=2em}}, column 3/.style={nodes={cell,minimum width=2em}}, column 4/.style={nodes={cell,minimum width=2em}}, column 5/.style={nodes={cell,minimum width=2em}}, column 6/.style={nodes={cell,minimum width=2em}}, column 7/.style={nodes={cell,minimum width=2em}},  column 8/.style={nodes={cell,minimum width=2em}}]
        {
            1 & 0 & 0 & 0 & 0 & 1 & 0 & 1 \\
        };
        
        \draw[->, blue] (seventhcsscls) -- (eighthcssclsc);
    \end{tikzpicture}}
    \caption{A Non-Determistic Bergman Game with initial state ${}_0(6){}_{0}$, Player One wins in the left branch and Player Two wins in the right branch.  Red arrows mark splits, and blue combines.}
    \label{fig:non-determ-bergman-game}
\end{figure}
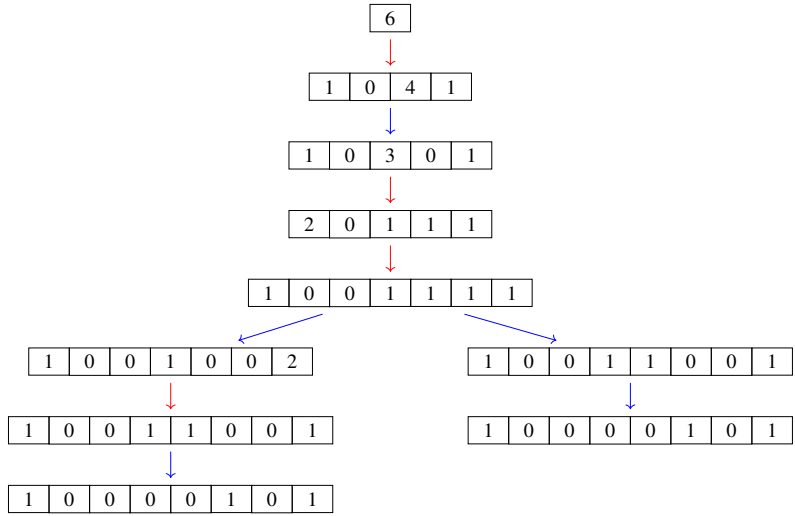

\begin{defn}\label{defn:chips}
    We define $\chips{S}$ as the number of summands in $S$. More formally, $\chips{S} \coloneqq \sum_j S(j)$. Note that the number of summands throughout a game is a non-increasing monovariant, as the number of summands remains the same with a split move and decreases by one with a combine.
\end{defn}

\begin{example}
    Let $S_i$ denote the initial game state from \Cref{fig:non-determ-bergman-game}, and $S_f$ the final state.  Then $\chips{S_i}=6$ and $\chips{S_f}=3$.
\end{example}

\begin{defn}\label{defn:plrs}
    We say a sequence is a Positive Linear Recurrence Sequence (PLRS) if it is given by a linear recurrence with characteristic polynomial $x^k-c_1x^{k - 1} - \cdots - c_k$ for some integers $c_i$ with $c_1,c_k > 0$ and $k\geq 2$.  We say it is non-increasing (\NIPLRS) if $c_1\geq c_2\geq\dots\geq c_k>0$. For convenience if $j > k$ we let $c_j = 0$.
\end{defn}

\begin{defn}\label{defn:generalized-bergman-game}
    The Generalized Bergman Game derives its moves from the Fibonacci recurrence relation. We can generalize it to an arbitrary \NIPLRS of depth at least two, which yields the Generalized Bergman Game (GBG). The Gneralized Bergman Game is also played on states which are doubly infinite tuples $S = (\ldots, S(-1), S(0), S(1), \ldots)$ of non-negative integers.

    On a given \NIPLRS with characteristic polynomial $x^k-c_1x^{k-1}-\dots-c_k$ the combine and split moves are given as follows.
    \begin{itemize}
        \item Combine: if there is an $i$ such that $S(i+j) \geq c_{k-j}$ for each $j \in \{0,1,2,\dots,k\}$ then decrease each $S(i+j)$ by $c_{k-j}$ and increase $S(i+k+1)$ by $1$. E.g.,
        \[
            (c_k,\dots,c_1,0) \to (\underbrace{0,...0}_{k},1).
        \]
        \item Split of type $1 \leq p \leq k - 1$: If there is an $i$ such that $S(i) \geq c_{p}+1$ and $S(i +j) \geq c_{p - j}$ for $j \in \{1, \ldots, p - 1\}$, decrease $S(i + j)$ by $c_{p - j}$, and then decrease $S(i)$ by $c_{p}+1$ and increase $S(i+p)$ by $1$ and increase $S(i-j)$ each by $d_{p,j}$ for $j \in \{1,2,3,\dots,k \}$, where $d_{p,j} \coloneqq c_j-c_{j+p}$. If $j +p > k$ then $d_{p, j} =  c_j$. E.g. a type one split has the form,
        \[
            (\underbrace{0,\dots,0}_{k},c_{1}+1,0) \rightarrow (d_{1,k},d_{1,k-1},\dots,d_{1,1},0,1).
        \]
        A split of type $p$ has the form
        \[
            (0, \ldots, 0, c_p + 1, c_{p - 1}, \ldots, c_1, 0) \rightarrow (d_{p, k}, d_{p, k - 1}, \ldots, d_{p, 1}, 0, 0, \ldots, 0, 1).
        \]
        Note that we can think of this as the combination of a ``reverse combine'' at index $i$, and then a combine performed into index $i + p$. This is illustrated below:
        \begin{align*}
            (0, \ldots, 0, c_p + 1, c_{p - 1}, \ldots, c_1, 0) &\rightarrow (c_k, \ldots, c_1, c_p, c_{p - 1}, \ldots, c_1, 0) \\
            &\rightarrow (d_{p, k}, d_{p, k - 1}, \ldots, d_{p, 1}, 0, 0, \ldots, 0, 1).
        \end{align*}
        \item Split Restriction: If a player may perform a split of type $p$ and a split of type $p' > p$ at index $i$, then the player must perform the split of type $p$ rather than the split of type $p'$. This condition makes the Generalized Bergman Game on the $(c, k)$-binacci reccurences (those where $c_i = c_j$ for all $1\leq i,j \leq k$) more accurately reflect a Generalized Zeckendorf Game (see \cite{baird-smithGeneralizedZeckendorfGame2019}) by disallowing moves like the following type two split for $c_1 = c_2 = c_3 = 1$:
        \[
            (0, 0, 0, 2, 1, 0) \to (1, 1, 0, 0, 0, 1).
        \]
        In a proper Generalized Zeckendorf Game where we're far from the boundary, players should instead take the type one splits displayed below:
        \[
            (0, 0, 0, 2, 1, 0) \to (1, 0, 0, 0, 2, 0) \to (1, 1, 0, 0, 0, 1).
        \]
        The reader might then ask why we do not allow only splits of type $1$. This essentially rests on the fact that we wish for the final states of the Generalized Bergman Game to be base-$\beta$ expansions for $\beta$ the dominating root of the given recurrence. For details see \Cref{prop:niplrs-unique-end-state}.
    \end{itemize}
    Once again, players take turns making moves from some starting state $S$, and the game ends when there are no more moves available to play.
\end{defn}

\begin{rmk}
    Let $\beta$ be the unique real root greater than $c_1$ for a given \NIPLRS. Then final states of the Generalized Bergman Game for this \NIPLRS correspond to base-$\beta$ expansions generated by the greedy algorithm, which we show in \Cref{sec:prelim}.
\end{rmk}

\subsection{Summary of Results}\label{subs:results}

We derive a number of results concerning the length of Generalized Bergman Games. To do so, we make heavy use of invariant and monovariant quantities associated to game states and moves. In order to make use of these monovariants, we must also bound the window in which a Generalized Bergman Game may take place. We leverage knowledge of base-$\beta$ representations to extract information about the final game state and, thereby, about the whole game.

To easily exposite our results, we provide the following two definitions.

\begin{defn}\label{defn:niplrs-longest-game}
    Fix some \NIPLRS on which to play the Generalized Bergman Game. Define $\mathcal{M}_\max(S)$ to be the maximum number of moves in a Generalized Bergman Game on a given initial state $S$. Similarly, let $\mathcal{M}_\min(S)$ denote the minimum number of moves in a Generalized Bergman Game on a given initial state $S$.

    We then define
    \begin{align}
        \widehat{\mathcal{M}}_\max(n) \ &\coloneqq\  \max_{\chips{S} = n} \mathcal{M}_\max(S) \label{eq:Mmax}\\
        \widehat{\mathcal{M}}_\min(n) \ &\coloneqq\  \max_{\chips{S} = n} \mathcal{M}_\min(S) \label{eq:Mmin}.
    \end{align}
\end{defn}

\begin{defn}\label{defn:niplrs-window}
    Fix some \NIPLRS on which to play the Generalized Bergman Game. Let $S$ be some initial game state, and without loss of generality suppose that $S$ has leftmost summand at $0$ and rightmost summand at $b$. We define $\mathcal{L}(S)$ to be the absolute value of the minimum index of the leftmost summand throughout any game played with starting state $S$.

    We then define
    \begin{align}
        \widehat{\mathcal{L}}(n, b) \coloneqq \max_{\substack{{}_0S_b \\ \chips{S} = n}} \mathcal{L}(S). \label{eq:Lnb}
    \end{align}
    For convenience, we also let $\widehat{\mathcal{L}}(n) = \widehat{\mathcal{L}}(n, 0)$.
\end{defn}

In order to state the results, we must define asymptotic notation.

\begin{defn}\label{defn:asym}
    Consider two functions $f, g : \N \to \R$. We say that $f(n) = O(g(n))$ provided that there is some constant $K > 0$ and some natural number $M \in \N$ so that for every $n \geq M$ we have $\abs{f(n)} \leq Kg(n)$.

    We may also write this as $g(n) = \Omega(f(n))$. If we have both that $f(n) = O(g(n))$ and $f(n) = \Omega(g(n))$ then we write that $f(n) = \Theta(g(n))$.
\end{defn}

With these notions in mind, we may state the main results of the paper.

\begin{theorem}\label{thm:niplrs-game-length-theta}
    We have $\widehat{\mathcal{M}}_\max(n) = \Theta(n^2)$. That is, the longest possible Generalized Bergman Game for a given \NIPLRS with $n$ summands terminates in $\Theta(n^2)$ moves.
\end{theorem}

\begin{theorem}\label{thm:game-length-theta-lower}
    We have $\widehat{\mathcal{M}}_\min(n) = \Omega(n)$.  That is, the shortest possible game length which is available from all starting states with $n$ summands is $\Omega(n)$.
\end{theorem}

\begin{theorem}\label{thm:bergman-game-length-theta-lower}
    For the Bergman Game in particular, $\widehat{\mathcal{M}}_\min(n) = \Theta(n)$.
\end{theorem}

\begin{theorem}\label{thm:niplrs-game-window}
    We have $\widehat{\mathcal{L}}(n, b) = O(n) + O(b)$ and $\widehat{\mathcal{L}}(n) = \Theta(n)$. That is, the maximum that a summand may be moved left throughout the game is at most linear in the number of summands and the length of the initial window. Furthermore, when the summands in the initial state are concentrated in a single index, we know that this quantity is exactly linear in the number of summands.
\end{theorem}

Together, these theorems nearly characterize termination times and index ranges of Generalized Bergman Games completely, with the exception of an $O(n)$ upper bound for $\widehat{\mathcal{M}}_\min(n)$ in the general case.  (The right bound of any game state is trivially logarithmic in the number of chips and width of initial state, as we show later.)

\section{Preliminaries}\label{sec:prelim}

We establish a number of useful quantities related to the Generalized Bergman Game and outline their properties. Crucially, we connect the game to base-$\beta$ expansions.

\begin{defn}\label{defn:add-game-states}
    Given any two game states $S, T$, define $(S + T)(j) \coloneqq S(j) + T(j)$. If $S(j) \geq T(j)$ for all $j$, we also define $(S - T)(j) \coloneqq S(j) - T(j)$.
\end{defn}

\begin{defn}\label{defn:niplrs-roots}
    Consider some \NIPLRS with coefficients $c_1 \geq \cdots \geq c_k$. Then we define $\beta$ to be the unique real root between $c_1$ and $c_1 + 1$ of the polynomial $x^k - c_1x^{k - 1} - \cdots - c_k$. The existence and uniqueness of such a root is shown in \cite{grabnerGeneralizedZeckendorfExpansions1994}. $\beta$ is then clearly an algebraic integer, and \cite{grabnerGeneralizedZeckendorfExpansions1994} also shows that all of its conjugates $\beta_j$ have modulus less than one.
    
    For convenience, define $\widetilde{\beta}$ to be such a conjugate root of maximal modulus $\widetilde{\beta} < 1$.
\end{defn}

Equipped with this root, we define a key invariant for the Generalized Bergman Game.

\begin{defn}\label{defn:value}
    The value $\val(S)$ of a game state $S$ played with a \NIPLRS with dominating root $\beta$ is $\sum_j S(j)\beta^j$, and the conjugate value $\valt(S)$ of a game state $S$ is $\sum_j S(j)(\widetilde{\beta})^j$.
\end{defn}

\begin{lemma}\label{lemma:value-invariance}
    The value $\val(S)$ and the conjugate value $\valt(S)$ are both invariants throughout the course of the Generalized Bergman Game. That is, if $S$ is some game state and $T$ is a game state obtained by applying some move to $S$, then $\val(S) = \val(T)$ and $\valt(S) = \valt(T)$.
\end{lemma}

\begin{proof}
    We simply need to show that $\sum_j S(j)x^j$ is an invariant for any root of the polynomial $x^k - c_1x^{k - 1} - \cdots - c_k$. 
    
    This sum is clearly invariant under combining moves, because if a combine into index $q$ is performed then we have that
    \begin{align}
        \val(T) - \val(S) = x^{q} - \sum_{j = 1}^k c_jx^{q-j} = x^{q-k}\left(x^k - \sum_{j = 1}^k c_jx^{k-j}\right) = 0.
    \end{align}
    Furthermore, this sum is invariant under all splitting moves. To see this, consider that by simple algebraic manipulation $x$ also satisfies the following equation for all $1 \leq p \leq k - 1$:
    \begin{align}
        (c_p + 1)x^k + \sum_{j = 1}^{p - 1} c_{p - j}x^{k + j} = x^{k + 1} + \sum_{j = 1}^k d_{p, k}x^{k - j}.
    \end{align}
    Therefore if a split of type $p$ is performed at index $q$ we have that
    \begin{align}
        \val(S) - \val(T) = x^{q-k}\left((c_p + 1)x^k + \sum_{j = 1}^{p - 1} c_{p - j}x^{k + j} - x^{k + 1} - \sum_{j = 1}^k d_{p, j}x^{k - j}\right) = 0.
    \end{align}
    This completes the proof.
\end{proof}

With this result, we can begin to think of game states as base $\beta$-representations of their value. Then we reach the following natural realization, of which we take repeated advantage:

\begin{prop}\label{prop:niplrs-unique-end-state}
    Given an initial state $S$ for which the Generalized Bergman Game terminates, the final state $S_f$ of the game is the unique base-$\beta$ expansion of $\val(S)$ obtained via the greedy algorithm.
\end{prop}

\begin{proof}
    The final state $S_f$ provides a base-$\beta$ representation of $\val(S)$ by \Cref{lemma:value-invariance}, and we must have $(S_f(j - k + 1), \ldots, S_f(j)) < (c_k, \ldots, c_1)$ in reverse lexicographic order for every $j$, as otherwise we would either be able to split or to combine. We must show that this is given by the greedy algorithm. Let $b$ be the index of the rightmost summand of $S_f$ and $a$ be the index of the leftmost summand of $S_f$ and let $N_1$ be some integer with $b - (N_1 -3)k < a$. Then
    \begin{align*}
        \val(S_f) - S_f(b)\beta^b &= \sum_{r = 1}^{N_1} \sum_{j = 0}^{k - 1} S_f(b-rk + j)\beta^{b - rk + j} \\
        &< \sum_{r = 1}^{N_1} \beta^{b - (r - 1)k} - \beta^{b - (r - 2)k} < \beta^{b}
    \end{align*}
    where the upper bound on the inner sum derives from the condition of reverse lexicographic ordering.  Specifically, each block of indices of width $k$  has maximized value if it is of the form $(c_k-1,c_{k-1},c_{k-2},\dots,c_1)$.
    This then implies that $v(S_f) < (S_f(b) + 1)\beta^b$, so our state contains the maximal possible number of summands of size $\beta^b$ (i.e., the greedy number). We also cannot have a summand of size $\beta^{b + 1}$ because
    \begin{align*}
        \val(S_f) &= \sum_{r = 1}^{N_1} \sum_{j = 1}^{k} S_f(b - rk +j)\beta^{b - rk + j} \\
                  &< \sum_{r = 1}^{N_1} \beta^{b - (r - 1)k + 1} - \beta^{b - (r - 2)k + 1} < \beta^{b + 1}
    \end{align*}
    Note that we cannot combine or split in the state $S_f - {}_b(S_f(b))$, and so by simple induction $S_f$ is indeed the greedy base-$\beta$ expansion. Because such expansions are unique, the end states of our game are unique for a given value.
\end{proof}

Thus, the final states of our game are base-$\beta$ expansions, and the game actually provides a slow algorithm for computing such expansions exactly given that the game terminates (which we prove in \Cref{sec:niplrs-term}).

We now quickly state and prove how the number of summands changes for each move, as this is one of our basic tools for deriving bounds.

\begin{lemma}\label{lemma:summand-change}
    A combine decreases $\chips{S}$ by $C_0 - 1$, where $C_0 \coloneqq \sum_{j = 1}^k c_j$ and a split does not change $\chips{S}$.
\end{lemma}

\begin{proof}
    This is a simple calculation. Let $S$ be the game state before a given combine into index $q$ and $T$ be the game state after such a combine. Then we have that
    \[
        \chips{S} - \chips{T} = \sum_j S(j) - \sum_j T(j) = -1 + \sum_{j = 1}^k c_j = C_0 - 1.
    \]
    Notice then that splits consist of a single combine played ``in reverse'' and then a combine played ``forward.'' Therefore, a split must preserve the total number of chips.
\end{proof}

\begin{cor}\label{cor:niplrs-combines-bound}
    There are at most $\frac{\chips{S}}{C_0 - 1}$ combines throughout the course of a game on a given initial state $S$. Assuming termination (which we prove in \Cref{sec:niplrs-term}), the number of combines is actually fixed as $\frac{\chips{S} - \chips{S_f}}{C_0 - 1}$, where $S_f$ is the final state of the game.
\end{cor}

\begin{proof}
    If $\chips{S} = 0$, then there are trivially no combines. Now note that if $\chips{S} > 0$ then $\val(S) > 0$, so by \Cref{lemma:value-invariance} we know that $\val(T) > 0$ for any game state $T$ occurring after $S$. This then implies that $\chips{T} > 0$ as well. We then apply \Cref{lemma:summand-change} to provide the given bound on the number of combines. 
    
    The fact that the number of combines is fixed follows from the fact that our final states are base-$\beta$ expansions, which are unique (see \cite{frougnyFiniteBetaexpansions1992}). Thus the number of summands in $S_f$ is fixed.
\end{proof}

\begin{defn}\label{defn:constants}
    For the remainder of the paper, we assume that we are playing the Generalized Bergman Game on a \NIPLRS with coefficients $c_1 \geq \cdots \geq c_k>0$ for $k \geq 2$. We also define a number of constants associated with such a recurrence. These are defined below as well as in \Cref{fig:table-constants}, which the reader may wish to use as a reference table. As convention recall that we say $c_j = 0$ for $j > k$.
    
    We let $d_{p, j} \coloneqq c_j - c_{p + j}$ for $1 \leq j \leq k$ and $1 \leq p \leq k$, $C_r \coloneqq \sum_{j = 1}^k j^rc_j$ for $r \in \Z_{\geq 0}$, $\beta$ be the dominating root of $x^k - c_1x^{k - 1} - \cdots - c_k$, $\widetilde{\beta}$ be the maximal root of modulus less than one, $\rho \coloneqq |\widetilde{\beta}|^{-1}$, and $R \coloneqq \log_\rho \beta$.
\end{defn}

\begin{figure}
    \centering
    \begin{tabular}{c|c}
        Term & Definition \\
        \hline
        $d_{p,j}$ & $c_j-c_{p+j}$ \\ 
        \rule{0pt}{3ex}$C_r$ & $\sum_{j=1}^{k} j^rc_j$ \\
        \rule{0pt}{2ex}$\beta$ & the Dominating Root \\
        $\widetilde{\beta}$ & Maximal Root with $|\widetilde{\beta}| < 1$\\
        $\rho$ & $|\widetilde{\beta}|^{-1}$ \\
        $R$ & $\log_\rho \beta$
    \end{tabular}
    \caption{Table of Useful Constants for a given \NIPLRS with coefficients $c_1 \geq \cdots \geq c_k$ which are used throughout the paper.}
    \label{fig:table-constants}
\end{figure}

\section{The Generalized Bergman Game Terminates}\label{sec:niplrs-term}

In this section we prove that the Generalized Bergman Game is in fact playable.

\begin{prop}\label{prop:niplrs-term}
    Any Generalized Bergman Game terminates in a finite number of moves.
\end{prop}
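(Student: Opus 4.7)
The plan is to prove termination by a monovariant argument: I will exhibit a quantity $\Phi$ on game states, taking values in a well-founded ordered set, which strictly decreases under every legal move. Once such a $\Phi$ is in hand, no play can continue indefinitely, and the proposition follows.

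The natural starting point is the observation that every move of the Generalized Bergman Game re-expresses the same integer through an identity derived from the defining recurrence, so the \emph{value} of the state (the sum of the underlying sequence terms indexed by the tokens) is conserved throughout the game. Because the recurrence has a dominant root $\lambda > 1$, its terms grow essentially like $C\lambda^n$. I would convert the conservation of value into an a priori upper bound on the total number of tokens that any reachable state may contain: a state with too many tokens would force its value to exceed the conserved quantity. Consequently the game never leaves a finite configuration space, and termination reduces to ruling out cycles inside that finite set.

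To rule out cycles I would introduce a lexicographic secondary invariant on the token-count vector $(a_N, a_{N-1}, \ldots, a_0)$, read from the highest position downward, and check that each move type strictly improves it in the correct direction. The main obstacle will be that different move types (combining, splitting, carrying) can push such a lex invariant in opposite directions, so a single uniform choice may fail. I expect to resolve this by a case analysis on move type: combining moves strictly increase the top-down lex order at their output index, whereas any splitting or carrying move is only legal in configurations where it strictly decreases a finer secondary monovariant (for instance, the number of tokens at or below a specified index, or a weighted sum $\sum_i a_i \lambda^{-i}$ calibrated against $\lambda$). Packaging the token-count bound as a coarse monovariant together with the lex refinement produces a strictly decreasing $\Phi$ valued in $\mathbb{N}^{N+1}$ with lex order, which is well-founded. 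This yields termination in finitely many moves, completing the proof.
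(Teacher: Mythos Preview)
Your proposal has a genuine gap at its core: the claim that ``the game never leaves a finite configuration space'' is false as stated. In the Generalized Bergman Game the index set is not bounded below a priori---a split at index $i$ deposits summands at indices $i-k, \ldots, i-1$, so repeated splits can push summands to arbitrarily negative positions. Your value argument does not prevent this: since $\val(T) = \sum_i T(i)\beta^i$ and $\beta^i$ is arbitrarily small for $i \ll 0$, a fixed value is compatible with tokens living arbitrarily far to the left. (It does not even bound the token count directly, since tokens at very negative indices contribute almost nothing; the bound $\chips{T}\le\chips{S}$ comes instead from the combinatorics of the moves.) Consequently your lexicographic vector $(a_N,\ldots,a_0)$ is not defined on a fixed finite tuple, and the well-foundedness you need evaporates.

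Establishing a left bound $L_S$ on the indices reachable from $S$ is in fact the heart of the paper's proof, and it requires a separate structural argument (bounding the maximum gap between summands, then counting summands). Once that is in place, the paper does not attempt a single monovariant that decreases under every move---you correctly anticipate that combines and splits push in opposite directions---but instead splits the analysis: combines are shown to be finite in number by a chip-count argument, and splits strictly decrease the index sum $\ind(T)=\sum_i iT(i)$, which is bounded below only \emph{because} of the left bound $L_S$. Your sketch never supplies this left bound, and the proposed lexicographic refinement with an auxiliary weight like $\sum_i a_i\lambda^{-i}$ is left as a hope rather than verified; without the left bound, no such refinement can be well-founded.
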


Using \Cref{cor:niplrs-combines-bound}, it suffices to consider games which consist only of splits in order to prove that the game terminates. To this end, we define a quantity associated to any game state which is a decreasing monovariant in games consisting only of splits. Then, to show that we cannot perform infinitely many splits we bound this monovariant from below by establishing a bound on how far to the left of the initial leftmost summand a particular game can run.

\subsection{Defining and Bounding \texorpdfstring{$\ind(S)$}{the Index Sum}}\label{subs:index-sum}

\begin{defn}\label{defn:index-sum}
    Let $S$ be some game state. We define the index sum of $S$, denoted by $\ind(S)$, to be the quantity $\sum_i iS(i)$.
\end{defn}

\begin{lemma}\label{lemma:niplrs-index-sum}
    Consider a \NIPLRS of depth at least two with coefficients $c_1 \geq \cdots \geq c_k$. A combine into index $i$ increases $\ind(S)$ by $-i(C_0 - 1) + C_1$. Likewise, a split of type $p$ at index $i$ in this game decreases $\ind(S)$ by $p(C_0 - 1)$. Thus, splits decrease the index sum by at least $C_0-1$ and at most $(k-1)(C_0 - 1)$.
\end{lemma}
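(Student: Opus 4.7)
The plan is to verify each claim by direct token accounting. Each move in the Generalized Bergman Game is a deterministic recipe that removes a fixed multiset of tokens and deposits another, so the change in the linear functional $\ind(S)=\sum_i i\,S(i)$ is obtained by summing $j$ times the signed change in $S(j)$ over all affected indices $j$.

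For a combine into index $i$, the move enacts the recurrence $H_i = c_1 H_{i-1} + \cdots + c_k H_{i-k}$ by removing $c_j$ tokens at index $i-j$ for each $j\in\{1,\dots,k\}$ and adding a single new token at index $i$. Therefore
\[
\Delta\ind(S) \;=\; i \;-\; \sum_{j=1}^{k}(i-j)\,c_j \;=\; i \;-\; i\,C_0 \;+\; C_1 \;=\; -i(C_0 - 1) + C_1,
\]
which is the first claim.

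For a split of type $p$ at index $i$, I apply the same accounting to the move's defining recipe (as set up earlier in the paper). The key structural observation is that a split preserves the total token count, so $\Delta\ind$ depends only on the relative index shifts of the tokens and is independent of $i$. Substituting into the split identity and collapsing the result telescopically — the coefficients appearing on the redistributed side can be written as partial sums of the $c_j$'s, so the telescoping sum simplifies to $C_0$ — yields $\Delta\ind = -p(C_0 - 1)$. The final bounds then follow immediately: in a \NIPLRS of depth $k \geq 2$ the type parameter satisfies $p\in\{1,\dots,k-1\}$, so the decrease lies in $[C_0-1,\ (k-1)(C_0-1)]$.

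The main obstacle is bookkeeping in the split case: unlike the combine, which reduces to a single substitution, the split identity involves several indices simultaneously, and one must check both (i) that all $i$-dependent contributions on the two sides cancel exactly, reflecting token-count preservation, and (ii) that the remaining $i$-independent piece is precisely $p(C_0-1)$ with no residual $C_1$ term (in contrast to the combine formula). I expect this to be a short telescoping calculation once the type $p$ split's recipe is written out explicitly, parallel to the one-line computation for combines above.
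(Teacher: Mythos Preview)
Your treatment of the combine case is exactly the paper's one-line computation. For the split case, however, the paper takes a different and slicker route than the direct token accounting you outline: it observes that a split of type $p$ at index $i$ is literally a reverse combine at index $i$ followed by a combine into index $i+p$. Since the combine formula $\Delta\ind = -i(C_0-1)+C_1$ has already been established, the split decrease is just
\[
\bigl(-i(C_0-1)+C_1\bigr) + \bigl((i+p)(C_0-1)-C_1\bigr) = p(C_0-1),
\]
with the $i$-dependence and the $C_1$ terms cancelling automatically. No separate recipe for the split needs to be written out at all.

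Your direct approach is also valid, and in fact simpler than you seem to expect: writing $\Delta\ind = (i+p) - i + \sum_{j=1}^k \bigl[(i-j)-(i+p-j)\bigr]c_j = p - pC_0$ requires no telescoping or partial-sum manipulation, just grouping the added and removed tokens pairwise. So the ``main obstacle'' you flag is not really there. The advantage of the paper's decomposition is conceptual reuse (one computation covers both move types); the advantage of your direct method is that it does not require knowing in advance that a split factors as two combines. Either way the argument is a couple of lines, and the final range $1\le p\le k-1$ gives the stated bounds.
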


\begin{proof}
    Consider a combine into index $i$, and let $S_1$ be the game state before this combine and $S_2$ be the game state after this combine. In this case we have that
    \begin{align*}
        \ind(S_2) - \ind(S_1) &= i - \sum_{j = 1}^k c_j(i - j) = -i(C_0 - 1) + C_1.
    \end{align*}
    Now we again view a split of type $p$ at index $i$ as consisting of a reverse combine at index $i$ and a combine into index $i + p$. Then letting $S_1$ be the game state before the reverse combine at index $i$, $S_2$ the game state after the reverse combine at index $i$, and $S_3$ after the combine into index $i + p$ (that is after the split), we have that
    \begin{align*}
        \ind(S_1) - \ind(S_3) &= (\ind(S_1) - \ind(S_2)) + (\ind(S_2) - \ind(S_3)) \\
                              &= -i(C_0 - 1) + C_1 + (i + p)(C_0 - 1) - C_1 \\
                              &= p(C_0 - 1).
    \end{align*}
    The lemma follows by noting that $1 \leq p \leq k - 1$.
\end{proof}

\begin{prop}\label{prop:niplrs-index-sum-bound}
    Suppose that given an initial state ${}_0S_b$ that there is a constant $L_S < 0$ such that for any game state $T$ occuring after $S$ we have that the leftmost summand of $T$ is at an index greater than or equal to $L_S$. Then
    
    $\ind(T) \geq \chips{S}L_S$.
\end{prop}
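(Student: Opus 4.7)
The plan is to bound $\ind(T) = \sum_i i\,T(i)$ termwise using the hypothesized index bound and then relate the chip count of $T$ back to that of $S$. First, since every summand of $T$ sits at some index $i \geq L_S$, we have $T(i) = 0$ whenever $i < L_S$. Because each $T(i) \geq 0$ and $L_S < 0$, the inequality $i \geq L_S$ yields $i\,T(i) \geq L_S\,T(i)$ in every case: when $i \geq 0$ the left side is nonnegative and the right side is nonpositive, and when $L_S \leq i < 0$ the inequality follows by multiplying $i \geq L_S$ through by the nonnegative number $T(i)$. Summing over $i$ gives
\[
    \ind(T) \;=\; \sum_i i\,T(i) \;\geq\; L_S \sum_i T(i) \;=\; L_S \cdot \chips{T}.
\]

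Next, I would argue that $\chips{\cdot}$ is monotone nonincreasing along any legal play. A combine into index $i$ consumes $C_0$ chips and produces a single chip, so it changes the chip count by $1 - C_0 \leq 0$. A split preserves the chip count, which is transparent from the reverse-combine-then-combine decomposition used in \Cref{lemma:niplrs-index-sum}: the reverse combine adds $C_0 - 1$ chips and the subsequent combine removes the same number. Since $T$ is reached from $S$ by a sequence of such moves, this yields $\chips{T} \leq \chips{S}$.

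To finish, note that because $L_S < 0$, the inequality $\chips{T} \leq \chips{S}$ flips to $L_S \cdot \chips{T} \geq L_S \cdot \chips{S}$ on multiplication by $L_S$. Chaining with the first bound delivers $\ind(T) \geq L_S \cdot \chips{S}$, as required.

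The only real pitfall is sign-handling: $L_S$ is negative, so the ``lower bound on indices'' only becomes a \emph{lower} bound on $\ind(T)$ after being multiplied by the nonnegative chip count, and the monotonicity of $\chips{\cdot}$ must be applied in the direction compatible with this sign flip. Apart from being careful about which way each inequality points, no deeper combinatorics is needed.
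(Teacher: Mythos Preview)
Your proposal is correct and follows essentially the same route as the paper: bound each term $i\,T(i)$ below by $L_S\,T(i)$, sum to get $\ind(T) \geq L_S\,\chips{T}$, and then use $\chips{T} \leq \chips{S}$ together with $L_S < 0$ to conclude. The only difference is that the paper cites an earlier lemma for the chip-count monotonicity, whereas you spell out the effect of combines and splits on $\chips{\cdot}$ directly; your more explicit sign-tracking is a welcome clarification but not a genuinely different argument.
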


\begin{proof}
    Note that $\chips{T} \leq \chips{S}$ by \Cref{lemma:summand-change}. We then have that
    \[
        \ind(T) = \sum_i iT(i) \geq \sum_i L_ST(i) = L_S\chips{T} \geq L_S\chips{S}.
    \]
    This establishes the result.
\end{proof}

We now prove a bound $L_S$ and thereby prove termination.

\subsection{Establishing a Left Bound via Gaps}\label{subs:gap-left-bound}

To establish a left bound, we rely on the gaps consisting entirely of zeros between summands in a game state $T$. At a high level, we prove that given an initial state $S$, the gaps in any game state $T$ cannot be longer than some constant length. We then space out the summands of $T$ as much as possible in a worst-case analysis to give a left bound.

\begin{defn}\label{defn:niplrs-max-gap}
    Given a game state $S$, let $\gap(S)$ denote the maximum gap consisting entirely of zeros between summands.
\end{defn}

\begin{rmk}
    Gaps between summands are well studied using probabilistic methods in a variety of integer decompositions, including Zeckendorf Decompositions and \PLRS decompositions (see \cite{summandsGapsGeneralizedZeck,gapsSummandsLatticeZeck,gapsCentralLimitTheorems2016}). For example, the gaps between summands in Zeckendorf decompositions displays geometric decay. It is interesting that they are also a natural quantity to prove termination of the Generalized Bergman Game.
\end{rmk}

We now establish two lemmas concerning the maximum gap size and the location of the rightmost summand of a game state.

\begin{lemma}\label{lemma:niplrs-gap-change}
    A combine may increase the maximum gap size by at most $k$ and a split cannot increase the quantity $\max(\gap(S), k)$, where $k$ is the depth of the recurrence in question.
    
    This then implies that for a game state $T$ occurring in a game after a state $S$ we have
    \[
        \gap(T) \leq \frac{k\chips{S}}{C_0 - 1} + \max(\gap(S), k),
    \]
    where $C_0 := \sum_j c_j$ and the coefficients of the \NIPLRS under question are $c_1 \geq \cdots \geq c_k \geq 1$. 
\end{lemma}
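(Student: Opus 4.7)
The plan is to prove the two per-move effects on $\gap$ separately and then iterate them using a bound on the number of combines obtained by chip accounting.

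For the combine case, a combine into index $i$ only alters chip counts at the $k+1$ indices $\{i-k,\ldots,i-1,i\}$: the first $k$ of these positions lose $c_1,\ldots,c_k$ chips respectively (and can become empty), while a fresh chip is placed at $i$. Hence any newly emptied region is contained in the length-$k$ interval $[i-k,i-1]$, blocked on the right by the new chip at $i$ but possibly merging with a pre-existing gap ending at $i-k-1$. In the worst case an existing gap of length $\gap(S)$ is extended by $k$ new empty positions, so $\gap$ increases by at most $k$.

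For the split case I would use the decomposition of a split of type $p$ at $i$ into a reverse combine at $i$ followed by a combine at $i+p$ (as in \Cref{lemma:niplrs-index-sum}). A direct net-change computation shows that the chip count at $i-j$ changes by $c_j-c_{j+p}$ for $1 \leq j \leq k-p$ and by $c_j$ for $k-p < j \leq k$; the non-increasing hypothesis $c_1 \geq \cdots \geq c_k$ makes both non-negative, so no chip in $\{i-k,\ldots,i-1\}$ is ever lost, and the positions $\{i-k,\ldots,i-k+p-1\}$ each gain at least one chip. Meanwhile chips at $\{i,\ldots,i+p-1\}$ may vanish, and the chip count at $i+p$ strictly increases. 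Consequently any newly created or merged empty region is trapped between the guaranteed chip at $i-k+p-1$ and the new chip at $i+p$, so its length is at most $k$; pre-existing gaps outside this window are untouched. Therefore $\gap(T) \leq \max(\gap(S),k)$ after any split.

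To finish, I would count combines by chip accounting: a combine decreases $\chips{\cdot}$ by $C_0-1$, whereas a split preserves $\chips{\cdot}$ (its two sub-operations contribute $+(C_0-1)$ and $-(C_0-1)$). Since $\chips{T} \geq 0$, any game from $S$ performs at most $\chips{S}/(C_0-1)$ combines. Setting $M(\cdot) := \max(\gap(\cdot),k)$, the previous two paragraphs show that $M$ rises by at most $k$ per combine and does not rise under splits, so $\gap(T) \leq M(T) \leq M(S) + k\chips{S}/(C_0-1)$, which is the stated inequality. I expect the split step to be the main obstacle: when $p < k$ the reverse combine and the combine touch overlapping indices, so one must track signed net changes carefully and invoke the monotonicity of the $c_j$ to certify both that nearby pre-existing gaps cannot grow and that any newly formed gap has length at most $k$.
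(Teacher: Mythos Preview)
Your argument is correct and follows essentially the same route as the paper's proof: for combines you do the worst-case count of how many positions can be emptied, and for splits you identify the guaranteed nonzero entries at $i-k,\ldots,i-k+p-1$ and at $i+p$ (using $c_1\geq\cdots\geq c_k\geq 1$) to trap any new gap inside an interval of length $k$, then finish with the chip-accounting bound on the number of combines. Your explicit net-change computation via the reverse-combine/combine decomposition and the introduction of the auxiliary monovariant $M(\cdot)=\max(\gap(\cdot),k)$ make the iteration step a bit more explicit than the paper's version, but the underlying idea is identical.
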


\begin{proof}
    Consider some combining move. In the worst possible case, all of the entries being combined become zeros as below:
    \[
        (1, 0, \ldots, 0, c_1, \ldots, c_k, 1) \to (1, 0, \ldots, 0, 0, \ldots, 0, 2).
    \]
   There are then $k$ zeros added to the gap on the left side, establishing the first piece of the claim.
    
    A split of type $p$ at index $i$ places a nonzero entry $d_{p, j} = c_j - c_{j + p} = c_j \geq 1$ at index $i - j$ for any $k + 1 - p \leq j \leq k$ and a one at index $i + p$. In the worst possible case, all the entries between index $i - k -1 + p$ and index $i + p$ are zero, and the split creates a gap of size $k$ and possibly decreases the size of the gaps to the left of index $i - k$ and to the right of index $i + 1$. This worst case analysis establishes the second part of the claim.
    
    To establish the given bound we then just note from \Cref{cor:niplrs-combines-bound} that the number of combines is bounded by $\frac{k\chips{S}}{C_0 - 1}$.
\end{proof}

\begin{lemma}\label{lemma:niplrs-cannot-shift-game}
    Suppose that $T$ occurs after some state $S$ in a game. Then the rightmost summand of $T$ is located at an index greater than or equal to the leftmost summand of $S$.
\end{lemma}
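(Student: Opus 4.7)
The plan is to prove the stronger monotonicity statement that the index of the rightmost summand is non-decreasing under any legal move. Granted this, iterating along the sequence of moves from $S$ to $T$ gives that the rightmost summand of $T$ is at an index at least that of the rightmost summand of $S$, which is trivially at least that of the leftmost summand of $S$.

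By induction on the number of moves, it suffices to consider a single combine and a single split. For a combine into index $i$, the move requires chips at indices $i-1, \ldots, i-k$, so in particular the rightmost summand before the combine is at some index $r \geq i-1$. The combine alters only indices in $[i-k, i]$ and places a new chip at $i$, so the new rightmost summand lies at index $\max(r, i) \geq r$. For a split of type $p$ at index $i$, I would use the decomposition of a split into a reverse combine at index $i$ followed by a combine into index $i+p$, as in the proof of \Cref{lemma:niplrs-index-sum}. This decomposition shows that the split touches only indices in $[i-k, i+p]$, removes a chip at index $i$, and places a new chip at $i+p$. If the rightmost summand before the split was at some index $r > i+p$, it is unaffected; otherwise $r \in [i, i+p]$, and the new chip at $i+p$ forces the rightmost after the split to sit at index at least $i+p \geq r$.

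The main obstacle is not any substantive calculation but rather correctly cataloguing the ``footprint'' of each move (the range of indices where chips can appear or disappear) and handling the boundary case in which the rightmost summand before the move coincides with one of the indices a move alters. Once the footprints $[i-k,i]$ and $[i-k, i+p]$ are read off from \Cref{lemma:niplrs-index-sum} and the proof of \Cref{lemma:niplrs-gap-change}, the non-decrease of the rightmost summand follows by the short case analysis above, and the conclusion of the lemma is immediate.
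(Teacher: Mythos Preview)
Your argument is correct, and it establishes the stronger fact that the index of the rightmost summand is monotone non-decreasing under every legal move. The case analysis is sound: for a combine into $i$ the only indices altered lie in $[i-k,i]$ and a chip is deposited at $i$, while a split of type $p$ at $i$ only alters indices in $[i-k,i+p]$ and deposits a chip at $i+p$; in either case the previous rightmost index cannot drop.

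The paper, however, takes a different and more global route. Rather than tracking the rightmost summand move by move, it invokes value invariance (\Cref{lemma:value-invariance}) together with $\chips{T}\le\chips{S}$ to sandwich
\[
\chips{S}\,\beta^{a}\ \le\ \val(S)\ =\ \val(T)\ \le\ \chips{T}\,\beta^{b}\ \le\ \chips{S}\,\beta^{b},
\]
where $a$ is the leftmost index of $S$ and $b$ the rightmost index of $T$; since $\beta>1$ this forces $a\le b$ in one stroke. So the paper's proof is shorter and conceptual (it never examines a single move), but it leans on the analytic input $\beta>c_1\ge 1$ and on value invariance. Your approach is purely combinatorial, needs nothing about $\beta$, and in fact yields the sharper monotonicity statement (rightmost of $T$ is at least the rightmost of $S$, not merely the leftmost). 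Either proof suffices for the application in \Cref{prop:niplrs-gap-left-bound}.
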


\begin{proof}
    Let the leftmost summand of $S$ be located at index $a$ and the rightmost summand of $T$ be located at index $b$. We wish to show $a \leq b$

    Consider that $\val(S) = \val(T)$ by \Cref{lemma:value-invariance}, then because $\beta > c_1 \geq 1$ and because $\chips{T} \leq \chips{S}$ we have that\newline
    \[
        \chips{S}\beta^a \leq \val(S) = \val(T) \leq \chips{T}\beta^{b} \leq \chips{S}\beta^b.\\
    \]
    Therefore $a \leq b$ as desired.
\end{proof}

Equipped with these, we can now begin our worst-case analysis which provides the left bound $L_S < 0$ needed to prove that the Generalized Bergman Game terminates.

\begin{prop}\label{prop:niplrs-gap-left-bound}
    Suppose that some game state $T$ occurs after a game state ${}_0S_b$ in a Generalized Bergman Game. Then the leftmost summand of $T$ is located at an index greater than or equal to
    \[
        -\frac{k\chips{S}(\chips{S} - 1)}{C_0 - 1} + \max(\gap(S), k)(\chips{S} - 1) - (\chips{S} - 1)
    \]
    for a given \NIPLRS with coefficients $c_1 \geq \cdots \geq c_k$ and $C_0 := \sum c_j$.
\end{prop}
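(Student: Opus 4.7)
The plan is to combine the three preceding results: \Cref{lemma:summand-change} (which gives $\chips{T} \leq \chips{S}$), \Cref{lemma:niplrs-cannot-shift-game} (which bounds the rightmost summand of $T$ from below by the leftmost summand of $S$), and \Cref{lemma:niplrs-gap-change} (which bounds $\gap(T)$). Intuitively, $T$ has at most $\chips{S}$ nonzero positions, its rightmost position cannot be negative, and any two adjacent nonzero positions are separated by at most $\gap(T) + 1$; so the leftmost position of $T$ cannot lie too far to the left of $0$.

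Here is how I would carry this out. Since the initial state is ${}_0S_b$, the leftmost summand of $S$ is at index $0$, so \Cref{lemma:niplrs-cannot-shift-game} gives that the rightmost nonzero index of $T$, call it $r$, satisfies $r \geq 0$. The number $n$ of distinct nonzero indices of $T$ is bounded by $\chips{T} \leq \chips{S}$. Writing the nonzero indices of $T$ as $\ell = p_1 < p_2 < \cdots < p_n = r$, the definition of $\gap(T)$ implies $p_{j+1} - p_j \leq \gap(T) + 1$ for each $j$, so
\[
    r - \ell \;=\; \sum_{j=1}^{n-1} (p_{j+1} - p_j) \;\leq\; (n-1)\bigl(\gap(T) + 1\bigr) \;\leq\; (\chips{S} - 1)\bigl(\gap(T) + 1\bigr).
\]
Combining with $r \geq 0$ gives $\ell \geq -(\chips{S} - 1)(\gap(T) + 1)$, and substituting the bound on $\gap(T)$ from \Cref{lemma:niplrs-gap-change} yields the stated expression after distributing $(\chips{S} - 1)$ over the three summands inside the parentheses.

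The main subtlety is purely bookkeeping: the quantity $\gap(T)$ counts the number of consecutive zero entries between adjacent nonzero positions, whereas what we need for the telescoping sum is the difference between consecutive nonzero indices, which is exactly $\gap(T) + 1$ in the worst case. This is what produces the extra $-(\chips{S} - 1)$ term in the final bound. Beyond this off-by-one care, the argument is a direct substitution of the previously established bounds, so there is no genuinely hard step — the work has all been done in the gap and chip-count lemmas.
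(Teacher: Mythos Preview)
Your argument is correct and follows the same worst-case spacing idea as the paper: bound the number of nonzero positions by $\chips{S}$, pin the rightmost one at a nonnegative index via \Cref{lemma:niplrs-cannot-shift-game}, and then stretch the remaining summands leftward using the gap bound from \Cref{lemma:niplrs-gap-change}. Your telescoping-sum presentation and explicit handling of the $\gap(T)+1$ off-by-one make the bookkeeping clearer than the paper's one-line worst-case description, but the substance is identical.
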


\begin{proof}
    There are at most $\chips{S}$ summands in $T$, and the rightmost summand of $T$ is located to the right of the zero index by \Cref{lemma:niplrs-cannot-shift-game}. Therefore using the bound on the maximum gap size $\gap(T)$ established in \Cref{lemma:niplrs-gap-change} we may analyze the worst case by placing single summands to the left which are spaced out by $\frac{k\chips{S}}{C_0 - 1} + \max(\gap(S), k)$ zeros. This gives the bound above.
\end{proof}

With this left bound in hand, we can establish \Cref{prop:niplrs-term} (i.e., that any Generalized Bergman Game terminates in a finite number of moves) easily.

\begin{proof}[Proof of \Cref{prop:niplrs-term}]
    Consider some initial state $S$, and note that since the moves of our game are translation invariant we may assume that the leftmost summand of $S$ is located at index zero without loss of generality\footnote{This is an advantage of working with Generalized Bergman Games instead of Generalized Zeckendorf Games, which are not translation invariant}.

    We know that there are finitely many combines performed in the game by \Cref{cor:niplrs-combines-bound}. Therefore, it suffices to show that there cannot be infinitely many splits in a row. 
    
    We know that the index sum decreases by at least one with each split from \Cref{lemma:niplrs-index-sum}. Furthermore, there is a constant $L_S < 0$ depending on the initial state $S$ so that the leftmost summand of any game state $T$ occurring after $S$ is at an index greater than or equal to $L_S$ from \Cref{prop:niplrs-gap-left-bound}. Therefore $\ind(T) \geq L_S\chips{S}$ by \Cref{prop:niplrs-index-sum-bound}.
    
    Putting these results together, there cannot be infinitely many splits or else the index sum would be driven below this bound, which is impossible.
\end{proof}

\begin{rmk}
    Using similar techniques we may establish that a much larger class of games terminates. In particular, the crucial properties used are that all moves either conserve or decrease the number of summands, and that the moves which conserve the number of summands cannot create gaps. We explore this generalization in \Cref{sec:term-mass-cons}.
\end{rmk}

\begin{cor}\label{cor:finite-beta-decomposition}
    Every number $x$ which can be expressed as a finite power series $\sum_{j = a}^b x_j \beta^j$ for $a, b \in \N$ and $x_j \geq 0$ has a finite base-$\beta$ expansion. See \Cref{prop:niplrs-unique-end-state} for the correspondence between end states and base-$\beta$ expansions.
    
    This result is known more generally; \cite{frougnyFiniteBetaexpansions1992} uses ergodic measure theory to prove that for all $\beta$ which are roots of Pisot Recurrences (a superset of \NIPLRS, see \Cref{defn:pisot-recurrence}), all positive members of $\Z[\beta] \subseteq \Z[\beta^{-1}]$ have finite base-$\beta$ representation. 
    
    However, \Cref{prop:niplrs-term} provides an alternate elementary proof that this special class of numbers have finite base-$\beta$ expansions.
\end{cor}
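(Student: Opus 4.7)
The plan is to realize $x$ as the value of an initial game state and invoke termination of the Generalized Bergman Game. Given $x = \sum_{j=a}^{b} x_j \beta^{j}$ with $a, b \in \N$ and $x_j \geq 0$, I would define a game state $S$ by placing $x_j$ chips at each index $j$ (and zero chips elsewhere). Because every chip is located at a nonnegative index, this is a legitimate initial state of the form ${}_0S_b$, and by construction $\val(S) = \sum_j x_j \beta^j = x$.

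Next, I would play any Generalized Bergman Game starting from $S$ until it reaches an end state $T$. By \Cref{prop:niplrs-term}, this takes only finitely many moves, and since each legal move preserves the value (\Cref{lemma:value-invariance}) we still have $\val(T) = x$. Moreover, the machinery developed in the proof of termination guarantees that $T$ is supported on finitely many indices: the total number of chips is bounded above by $\chips{S}$ (via \Cref{lemma:summand-change}), the rightmost chip of $T$ lies weakly to the right of the leftmost chip of $S$ (\Cref{lemma:niplrs-cannot-shift-game}), and the leftmost chip of $T$ is bounded below by the explicit $L_S$ of \Cref{prop:niplrs-gap-left-bound}.

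Finally, I would invoke \Cref{prop:niplrs-unique-end-state}, which identifies end states with legal base-$\beta$ expansions. Reading off the digits of $T$ therefore yields a base-$\beta$ expansion of $\val(T) = x$, and since $T$ is supported on a finite range of indices this expansion has only finitely many nonzero digits. This is precisely the finite base-$\beta$ expansion promised by the corollary.

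I do not expect any step of this plan to pose a genuine obstacle: the entire argument is essentially a repackaging of \Cref{prop:niplrs-term} together with the correspondence in \Cref{prop:niplrs-unique-end-state}. The only point meriting care is verifying that the configuration constructed from the power series is a bona fide initial state, which is immediate from the hypotheses $x_j \geq 0$ and $a \in \N$.
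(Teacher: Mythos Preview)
Your proposal is correct and matches the paper's intended argument: the corollary carries no separate proof in the paper, but its own text already points to encoding the power series as an initial game state, invoking \Cref{prop:niplrs-term} to reach an end state, and then using \Cref{prop:niplrs-unique-end-state} to read off a finite base-$\beta$ expansion. Your write-up simply spells out these steps (and the finite-support remark is even slightly more than needed, since $\chips{T}\le\chips{S}<\infty$ already forces finite support).
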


\section{Termination of the Generalized Bergman Game in \texorpdfstring{$O(\chips{S}^2)$}{O(|S| * |S|)} moves}\label{sec:niplrs-n2-term}

We now show that given any initial state $S$, the Generalized Bergman Game terminates in $O(\chips{S}^2)$ moves. To do so, we first establish weaker termination results, which allow us to tackle the main result in much greater concision.


\subsection{The Left Bound for the Generalized Bergman Game}


We first establish a better bound on how far a game may run to the left of the leftmost nonzero index of an initial state $S$. To do this, we must examine the base-$\beta$ expansion of $\val(S)$, or equivalently the final state $S_f$ achieved by playing the game until it ends (which is guaranteed by \Cref{prop:niplrs-unique-end-state} and \Cref{prop:niplrs-term}). 

We begin by slightly generalizing a theorem of Grabner et al. in \cite{grabnerGeneralizedZeckendorfExpansions1994}. For this, we must define a particular type of recurrence relation.

\begin{defn}\label{defn:pisot-recurrence}
    A recurrence $G_n = a_1G_{n - 1} + \cdots + a_kG_{n - k}$ for integers $a_1, \ldots, a_k \geq 0$ and $a_k > 0$ is a Pisot Recurrence provided that the dominating real root $\beta > 1$ of the characteristic polynomial $x^{k + 1} - a_1x^k - \cdots - a_k$ is a Pisot number. That is all the other roots of this polynomial have modulus strictly less than one.
\end{defn}

\begin{theorem}\label{thm:pisot-left-bound}
    Consider some dominating root $\beta$ of the characteristic polynomial of some Pisot Recurrence so that every $\gamma \in \Z[\beta]$ has a finite $\beta$ expansion. 
    
    Then let the base $\beta$ representation of some integer $m$ be $\sum_{\ell = -L(m)}^{R(m)} x_\ell \beta^\ell$ with $x_{-L(m)}, x_{R(m)} \neq 0$. Then $R(m) = \floor{\log_\beta m}$ and $L(m) \leq \log_\rho m + A$ for some constant $A$ and $\beta_2 \neq \beta$ a conjugate of $\beta$ with $\abs{\beta_2} < 1$ which is maximal in modulus among such conjugates with $\rho := \abs{\beta_2}^{-1}$.
\end{theorem}

We apply this theorem to the Generalized Bergman Game by extending it to any element $\alpha \in \Z[\beta]$ with finite base $\beta$ expansion. To do so we must use the following theorem of C. Frougny and B. Solomyak, which we state here for completeness.

\begin{theorem}[Frougny, Solomyak, \cite{frougnyFiniteBetaexpansions1992}]\label{thm:f}
    Let $\beta$ be the dominating root of some \NIPLRS. Then the set $\mathcal{F}$ of numbers with finite base $\beta$ decompositions coincides with the positive members of $\Z[\beta^{-1}]$
\end{theorem}

\begin{porism}\label{porism:niplrs-base-left-bound}
    Consider some $\alpha \in \Z[\beta]$ with a finite base-$\beta$ expansion $\sum_{\ell = -L(\alpha)}^{R(\alpha)} x_\ell \beta^\ell$, where $\beta$ is the dominating root of the characteristic polynomial of some \NIPLRS and $x_{-L(\alpha)}, x_{R(\alpha)} \neq 0$. Then $R(\alpha) = \floor{\log_\beta \alpha}$ and $L(\alpha) \leq \log_\rho \alpha + A$ for some constant $A$ and $\beta_2 \neq \beta$ a conjugate of $\beta$ with $\abs{\beta_2} < 1$ which is maximal in modulus among such conjugates with $\rho := \abs{\beta_2}^{-1}$.
\end{porism}

\begin{proof}
    We replicate the proof in \cite{grabnerGeneralizedZeckendorfExpansions1994} of \Cref{thm:pisot-left-bound} for completeness although the full argument carries through in this greater generality. For convenience let $\beta_j$ denote the conjugates of $\beta$.
    
    The bound for $R(\alpha)$ is trivial, as otherwise $\sum_{-L(\alpha)}^{R(\alpha)} x_\ell \beta^\ell$ would be greater than $\alpha$. To bound $L(\alpha)$, let $M$ be large enough so that $\abs{\alpha\beta_2^M} < 1$, explicitly, we may let $M$ be greater than or equal to $\ceil{\log_\rho \alpha}$. Now use the greedy algorithm in order to write
    \[
        \alpha = \sum_{\ell = -u + 1}^{R(\alpha)} \epsilon_\ell \beta^\ell + \beta^{-u}\gamma^{(u)}
    \]
    for any $u \geq M$ and some positive $\gamma^{(u)} \in \Q(\beta)$ with $\gamma^{(u)} < 1$ for $u > 0$, as otherwise we would have taken another $1 = \beta^0$ using the greedy algorithm.
    
    Now let $\gamma_j^{(u)}$ be the image of $\gamma^{(u)}$ under the isomorphism $\Z[\beta] \to \Z[\beta_j]$ induced by $\beta \mapsto \beta_j$. If $a_1$ is the leading coefficient in the Pisot recurrence, then by the definition of the decomposition we have:
    \[
        \abs{\gamma_j^{(u)}} \leq \abs{\alpha\beta^u_j} + a_1\sum_{\ell = 1}^\infty \abs{\beta_j}^\ell < 1 + \frac{a_1\abs{\beta_j}}{1 - \abs{\beta_j}}
    \]
    Because each $\gamma^{(u)} \in \Z[\beta]$ and all the conjugates $\gamma^{(u)}_j$ are bounded independently of $u$ and $\gamma^{(u)}$, there exist only finitely many such $\gamma^{(u)}$. By \cite{frougnyFiniteBetaexpansions1992} (stated at \Cref{thm:f}) all positive elements of $\Z[\beta^{-1}]$ have finite expansions and $\Z[\beta] \subseteq \Z[\beta^{-1}]$ for the $\beta$ we are considering.
    
    Now choose the maximum length of such an expansion of some $\gamma^{(u)}$ and call it $A'$. When we expand $\gamma^{(u)}$ its nonzero entries will all be in entries less than zero because it is less than one. Therefore the expansion of $\alpha$ will be given by $\sum_{\ell = - u + 1}^{R(\alpha)} \epsilon_\ell \beta^\ell$ and the expansion of $\gamma^{(u)}$ glued on to the left, shifted to have ``zero'' index at index $-u$.
    
    Therefore the bound is $u + A'$. And since any $u \geq M$ works, we have an upper bound of $M + A'$ on $L(\alpha)$. Setting $M := \ceil{\log_\rho \alpha}$ gives $L(\alpha) \leq \log_{\rho} \alpha + A$ as desired, where $A := A' + 1$.
\end{proof}

\begin{cor}\label{cor:niplrs-final-left-bound}
    An initial game state $S$ whose summands are all in non-negative indices represents some element $\val(S) \in \Z[\beta]$ with finite base-$\beta$ expansion by \Cref{prop:niplrs-term} given by the final game state $S_f$. Then the leftmost summand of $S_f$ is at an index greater than or equal to $-\log_\rho \val(S) -A$.
\end{cor}

We now prove \Cref{thm:game-length-theta-lower}.

\begin{proof}
    Consider an initial state $S$ with all summands initially at the same index, without loss of generality $S = {}_0(n)$ and $\val(S) = n$.  By \Cref{cor:niplrs-final-left-bound} and by the trivial logarithmic right bound on the final state, the final state is contained in a window of indices of logarithmic width $a\log(n)$ for some constant $a$ depending on the particular \NIPLRS. 
    
    Then the total number of summands in the final state is at most $ac_1\log(n)$, since any more chips would create opportunity for a split by the pigeonhole principle.  Since we begin with $n$ chips and each combine removes $C_0-1$ chips, then we must perform at least $\frac{n-ac_1\log(n)}{C_0-1}=\Omega(n)$ combines.  The result follows.
\end{proof}

\begin{cor}\label{cor:niplrs-left-bound}
    Consider some initial game state $S$ whose summands are all in non-negative indices. Then the leftmost summand of any game state $T$ occuring after $S$ is at an index greater than or equal to $-k\frac{\chips{S}}{C_0 -1} - \log_\rho \val(S) -A$, where $k$ is the depth of the \NIPLRS under consideration. 
    
    This also shows that $\widehat{\mathcal{L}}(n, b) = O(n) + O(b)$ and $\widehat{\mathcal{L}}(n) = O(n)$ because $\log_\rho \val(S) \leq b + \log_\rho n$ when $S$ has rightmost summand at $b$ and contains $n$ chips. This shows one half of \Cref{thm:niplrs-game-window}.
\end{cor}

\begin{proof}
    Notice that a splitting move cannot push the leftmost summand towards the right, and a combining move can push the leftmost summand at most $k$ to the right, as below:
    \[
        (c_1, \ldots, c_k, 0) \to (0, \ldots, 0, 1).
    \]
    Therefore, since there are at most $\frac{\chips{S}}{C_0 - 1}$ combines throughout the course of the game, we know that the leftmost bound gets pushed at most $k\frac{\chips{S}}{C_0 - 1}$ to the right throughout the course of the game.
    
    In order to satisfy the bound given by \Cref{cor:niplrs-final-left-bound} we must then have the desired bound on the leftmost summand in any state $T$ occuring after $S$.
\end{proof}

Having established the left bound, we now write down the right bound.

\begin{prop}\label{prop:niplrs-right-bound}
    The rightmost summand in a game state $S$ is at an index less than or equal to $\log_\beta \val(S)$. Because $\val(S) = \val(T)$ for any game state $T$ occurring after $S$, this result also holds for game states $T$ occuring after $S$.
\end{prop}

\begin{proof}
    If this were not true, then we would have $\val(S) \geq \beta^{\log_\beta \val(S) + 1}> \val(S)$, which cannot hold.
\end{proof}

This establishes bounds of $-k\frac{\chips{S}}{C_0-1} - \log_\rho \val(S) -A$ and $\log_\beta \val(S)$ where one can play the game, exhibiting that it truly takes place only on a tuple whose length is linear in the number of summands and logarithmic in the value of the game state. In special cases, we can improve on this result.

\begin{prop}\label{prop:phi-final-left-bound}
    Consider a game state $S$ in the Bergman Game whose summands are all in non-negative indices. Then the final game state $S_f$ (i.e., the base-$\varphi$ expansion of $\val(S)$) has leftmost summand at an index greater than or equal to $-\log_\varphi \chips{S} - 2$.
\end{prop}

\begin{proof}
    Let $a$ denote the index of the leftmost summand in $S_f$. Then consider the following chain of inequalities
    \begin{align*}
        \chips{S} &\geq \abs{\valt(S)} = \abs{\valt(S_f)} = \abs{\psi^a + \sum_{j = 1}^\infty S_f(a + j)\psi^{a + j}} \\
                 &\geq \abs{\psi}^a\left(1 - \sum_{j = 2}^\infty S_f(a + j)\abs{\psi}^j\right) \geq \abs{\psi}^a\left(1 - \sum_{j = 0}^\infty \abs{\psi}^{2j + 2}\right) \\
                  &= \varphi^{-a}\left(1 - \frac{\varphi^{-2}}{1 - \varphi^{-2}}\right) = \varphi^{-a - 2}.
    \end{align*}
    By taking the $\log_\varphi$ on both sides and rearranging we then have that $a \geq -2-\log_\varphi \chips{S}$. The only nontrivial inequalities above are in the second line. The first inequality in this line follows by pulling out the $\abs{\psi}^a$, applying the reverse triangle inequality, and noting that $S_f(a + 1) = 0$ because $S_f$ is a reduced game state. The second inequality in this line also follows by considering what we know because $S_f$ is reduced. Namely, each nonzero entry must be followed by a zero, and all entries are less than or equal to one. Then because $j \mapsto \abs{\psi}^j$ is a decreasing function, the maximal arrangement is $(0, 0, 1, 0, 1, 0, 1, 0, \ldots)$; i.e., the sum $\sum_{j = 0}^\infty \abs{\psi}^{2j + 2}$.
\end{proof}

\begin{cor}\label{prop:phi-left-bound}
    Using the arguments of \Cref{cor:niplrs-left-bound}, we can conclude that given an initial state $S$ in the Bergman game, the leftmost summand in any state $T$ occurring after $S$ is at an index greater than or equal to $-2\chips{S} - \log_\varphi \chips{S} - 2$.
\end{cor}

\begin{rmk}
    The proof of this left bound for the Bergman Game uses entirely elementary arguments. The reader might then wonder why we chose to bring in algebraic number theory from both \cite{grabnerGeneralizedZeckendorfExpansions1994} and \cite{frougnyFiniteBetaexpansions1992} for the general case. 
    
    There is a sort of miracle in the Bergman Game case that the conjugate root $\beta_2 = -1/\varphi$ is a negative real number, and so the sign of $\beta_2^j$ is predictable. Furthermore, it relies crucially on the fact that $1 - \frac{\psi^2}{1 - \psi^2} =\psi^2$, a predictable value smaller than one. Because neither of these hold for arbitrary \NIPLRS, the relevant geometric series evaluate to a value possibly larger than one, breaking the argument. 
\end{rmk}


\subsection{The Upper Bound on Termination Time}\label{subs:niplrs-n2-term-upper-bound}


We begin by proving that the game terminates quickly for games within a certain index range. We then use this result to show that we can ``chunk'' apart larger games into non-interacting smaller games.  Finally, we show the game terminates from any initial game state in $n^2$ time where $n := \chips{S}$.

\begin{prop}\label{prop:niplrs-game-length-nonabsolute}
    Let ${}_aS_b$ be some initial game state. Then we know that the Generalized Bergman Game for a \NIPLRS of depth at least two with coefficients $c_1 \geq \cdots \geq c_k \geq 1$ played on this initial game state terminates in
    \begin{align*}
        \frac{k}{2(C_0 - 1)^2}\chips{S}^2 + O(\chips{S}\log \chips{S}) + \chips{S}O(b-a).
    \end{align*}
    For the definitions of these constants refer to \Cref{fig:table-constants}.
\end{prop}

\begin{proof}
    We begin by shifting $S$ so that its leftmost summand is at zero without loss of generality, since our rules are translation invariant. Thus, we can assume that $a = 0$. We now upper bound $\ind(S)$ by $b\chips{S}$, simply by considering the worst case where all of the summands are located at $b$.
    
    By \Cref{cor:niplrs-final-left-bound} we may also lower bound $\ind(S_f)$, where $S_f$ is the final game state. For convenience let $\rho := \abs{\beta_2}^{-1}$ and $R := \log_\rho \beta$ as in the introduction, then we have that
    \begin{align*}
        \ind(S_f) &\geq - \chips{S_f}\log_\rho \val(S) - A\chips{S_f} \geq -\chips{S_f}\log_\rho \chips{S_f}\beta^{b} - A\chips{S_f}\\
        &= -Rb\chips{S_f} - \chips{S_f}\log_\rho \chips{S_f} - A\chips{S_f} \geq -O(\chips{S}\log\chips{S})
    \end{align*}
    where $A$ is the constant dependent on $c_1, \ldots, c_k$ coming from the Algebraic Number Theory in \Cref{porism:niplrs-base-left-bound}.
    
    Quickly recall that $C_0 = \sum c_j$, $C_1 = \sum jc_j$ and note that $C_0 \geq 2$ and $C_1 \geq \frac{k(k-1)}{2}$ when $k \geq 2$. Then consider how much the given combines may increase $\ind(T)$. To do this, we must see that the leftmost possible combine is played into the index $-k\frac{\chips{S}}{C_0 - 1} - \log_\rho \val(S) -A + k$ by \Cref{cor:niplrs-left-bound}, since we need $c_k \geq 1$ summands $k$ indices to the left to combine into an index. The next combine would then have to occur at $-k\frac{\chips{S}}{C_0 - 1} - \log_\rho \val(S) -A + 2k$, and so on until we have all $\frac{\chips{S}}{C_0 - 1}$ combines in order to satisfy the final left bound of \Cref{cor:niplrs-final-left-bound}. Using \Cref{lemma:niplrs-index-sum} we then have that $\ind(T)$ increases by at most
    \[
        \sum_{r = 1}^{\frac{\chips{S}}{C_0 - 1}} \left[-k\frac{\chips{S}}{C_0 - 1} - \log_\rho \val(S) -A + rk - \sum_{j = 1}^k c_j\left(-k\frac{\chips{S}}{C_0 - 1} - \log_\rho \val(S) -A + rk - j\right)\right]
    \]
    over the course of the entire game. We then bound $\log_\rho \val(S)$ above by $Rb + \log_\rho \chips{S}$ and below by $\log_\rho \chips{S}$ to see that $\ind(T)$ increases by at most
    \begin{align*}
        &\sum_{r = 1}^{\frac{\chips{S}}{C_0 - 1}} \left[-k\frac{\chips{S}}{C_0 - 1} - \log_\rho \chips{S} -A + rk - \sum_{j = 1}^k c_j\left(-k\frac{\chips{S}}{C_0 - 1} - Rb - \log_\rho \chips{S} -A + rk - j\right)\right] \\ 
        &=
        \sum_{r = 1}^{\frac{\chips{S}}{C_0 - 1}} \left[k\chips{S} - (C_0 - 1)rk + O(b) + O(\log |S| )\right].
    \end{align*}
    Evaluating this sum gives an increase of
    \begin{align*}
        \Delta^+\ind :&= \frac{k\chips{S}^2}{2(C_0 - 1)} + \chips{S}O(b) + O(\chips{S}\log\chips{S})
    \end{align*}
    in $\ind(T)$ over the course of the whole game. Note then that a split decreases $\ind(T)$ by at least $C_0 - 1$. Therefore in order to not break the lower bound on $\ind(S_f)$ we have the following maximum total number of splits across the course of the entire game:
    \[
        \frac{1}{C_0 - 1} \cdot \left[\Delta^+\ind + Rb\chips{S} + \chips{S}\log_\rho \chips{S} + A\chips{S} + b\chips{S}\right].
    \]
    There are at most $\frac{\chips{S}}{C_0 - 1}$ combines throughout the course of the game, so these get folded into the error term $O(\chips{S}\log\chips{S})$. Unfolding the calculation of $\Delta^+\ind$ then shows the proposition.
\end{proof}

\begin{theorem}\label{thm:niplrs-game-length}
    Given any initial state $S$, the Generalized Bergman Game terminates in $O(\chips{S}^2)$ moves. 
\end{theorem}

\begin{proof}
    Fix some Generalized Bergman Game $G$ which reduces an initial configuration $S$ to its final state $S_f$ on a \NIPLRS with coefficients $c_1 \geq \cdots \geq c_k \geq 1$. For ease of reading, let $\len({}_aT_b) = b - a$ be the length (where we consider ${}_0(n)$ to have length zero) of a game state $T$ with leftmost summand at $a$ and rightmost summand at $b$.
    
    Now let $T^0 \coloneqq S_f$ . We may break $T^0$ into chunks $T^{0, 1}, \ldots T^{0,m_0}$. Namely, whenever we see a run of $k + 1$ or more zeros we break off one chunk. This can be seen below when $k = 2$ for the Bergman Game:
    \begin{center}
        \begin{tabular}{| c c c c c | c c c c | c | c c c | c c c c c c c|}
            & & $T^{0,1}$ & & & & &  & & $T^{0,2}$ & & & & & & & $T^{0,3}$ & & &\\
            \hline
            1 & 0 & 1 & 0 & 1 & 0 & 0 & 0 & 0 & 1 & 0 & 0 & 0 & 1 & 0 & 0 & 1 & 0 & 0 & 1
        \end{tabular}
    \end{center}
    Let $n_{0, j} = \chips{T^{0,j}}$ and note that $T^0 = \sum T^{0,j}$ so $n_0 := \chips{T^0} = \sum n_{0, j}$. Since $T^{0,j}$ contains no runs of $(k + 1)$ zeros we know that $\len(T^{0,j}) \leq (k + 1)n_{0, j}$. 
    
    Now play the game $G$ in reverse from the final state $T^0$ using only reverse splits until we get to the last combine. In doing so, note that all the reverse splits played are independent, and could not have interacted. Why? Well a reverse split always moves the leftmost and rightmost bounds of a game state inward if it doesn't change them. Furthermore, we can't perform a reverse split using things from two different chunks, because reverse splits require nonzero values exactly $k$ apart to perform.
    
    Call the game state after performing these reverse splits and one reverse combine $T^1$. We break $T^1$ into chunks $T^{1,1}, \ldots, T^{1,m_1}$ by keeping track of where each chunk $T^{0, 1}, \ldots, T^{0, m_0}$ went under these game moves, combining two chunks if they become closer than a gap of $(k + 1)$ zeros.
    
    We then note that $m_1 = m_0$ or $m_1 = m_0 - 1$. Reverse splits drive the bounds of each chunk inwards and cannot merge chunks. Then a reverse combine may only bridge a single gap of at most $2k$ zeros between chunks. Similarly, a reverse combine cannot create new chunks, and can only increase the length of a chunk by at most $k$.
    
    Continue this process to form a sequence of game states $T^0, \ldots, T^{K}$ where $K$ is the number of combines, and chunks $T^{i, j}$ where $0 \leq i \leq K$ and $1 \leq j \leq m_i$. We then backtrack through any remaining reverse splits to get chunks $S^1, \ldots, S^{M}$ of the initial state. These chunks all play independently in the game $G$ by the previous remarks.
    
    We now begin to bound the length of the game $G$. To do this, we bound the total lengths of the chunks in a clever way. A reverse split never increases the length of a chunk, and a reverse combine increases the total length of chunks by at most $2k$. Therefore since there are at most $\frac{\chips{S}}{C_0 - 1}$ combines we have that
    \[
        \sum_i \len(S^i) \ \leq\  \frac{2k\chips{S}}{C_0 - 1} + \sum_j \len(T^{0, j}) \ \leq\  \frac{2k\chips{S}}{C_0 - 1} + (k + 1)\sum_j n_{0, j} \ = \ O(\chips{S}).
    \]
    Because the game plays independently on each chunk $S^1, \ldots, S^M$, we may use the bound from \Cref{prop:niplrs-game-length-nonabsolute} to see that the number of moves is bounded above by the following:
    \begin{align*}
        \sum_i \left[O(\chips{S^i}^2) + \chips{S^i}O(\len(S^i))\right].
    \end{align*}
    Using the fact that $\sum_i \chips{S^i} = \chips{S}$ and we may then bound each piece of this easily:
    \begin{align*}
        \sum_i \chips{S^i}^2 &\leq \left(\sum_i \chips{S^i}\right)^2 = \chips{S}^2 \\
        \sum_i \len(S^i)\chips{S^i} &\leq \chips{S}\sum_i \len(S^i) = O(\chips{S}^2).
    \end{align*}
    In total, this provides a bound on the length of a game $G$ with initial state $S$ by $O(\chips{S}^2)$ moves, completing the proof.
\end{proof}


\section{Extremal Games}\label{sec:niplrs-long-game}

We present an $\Omega(n^2)$ move GBG game and thus show that our upper bound on $\widehat{\mathcal{M}}_\max(n)$ is tight.  We subsequently show that our bound is tight in the constant of the leading term for the Bergman Game.  We then present an $O(n)$ Bergman Game, which shows that our lower bound on $\widehat{\mathcal{M}}_\min(n)$ is tight for the Bergman Game.


\subsection{Construction of a \texorpdfstring{$\Omega(\chips{S}^2)$}{Omega(|S|*|S|)} Generalized Bergman Game}
We show that there is a long Generalized Bergman Game. In particular, we show that using a particular strategy for selecting moves, we can make a game which takes $\Omega(n^2)$ moves from the initial state $S := {}_0(n)$ to the final state $S_f$. This proves the asymptotic tightness of \Cref{thm:niplrs-game-length}, showing that games are in fact $\Theta\left(\chips{S}^2\right)$ as claimed in \Cref{thm:niplrs-game-length-theta}. We start by detailing the strategy employed in this game.

\begin{defn}\label{defn:slcr}
    The Split Left and Combine Right ($\slcr$) strategy mandates that each player always takes the split farthest to the left whenever there is a split available, and otherwise takes the farthest combine to the right.
    
    Similarly, we may define the strategies $\slcl$, $\srcl$, $\srcr$ as well as the strategies $\clsl$, $\clsr$, $\crsr$, and $\crsl$.
\end{defn}

\begin{prop}\label{prop:slcr-phases}
    The $\slcr$ strategy played on any initial state divides the game into two phases. In Phase I, all of the splits are performed, and in Phase II only combines are performed.
\end{prop}

\begin{proof}
    Suppose that we are in a state $S$ where we cannot split. We show that using the $\mathcal{S}_{\mathscr{L}}\mathcal{C}_{\mathscr{R}}$ strategy we never split again, proving the first claim. 
    
    In this case, we know that no string of entries in $S$ is pairwise greater than or equal to $(c_p + 1, c_{p - 1}, \ldots, c_1)$. In order to split again, we would need to combine to create an entry of size at least $c_p + 1$ occuring before entries of size at least $c_{p - 1}, \ldots, c_1$. The only possible way to do this would be to take the move below
    \[
        (c_k, c_{k - 1}, \ldots, c_1, c_p, c_{p - 1}, \ldots, c_1) \to (0, 0, \ldots, c_p + 1, c_{p - 1}, \ldots, c_1).
    \]
    However, the $\slcr$ strategy mandates that we instead take the move
    \[
        (c_k, c_{k - 1}, \ldots, c_1, c_p, c_{p - 1}, \ldots, c_1) \rightarrow (d_{p, k}, \ldots, d_{p, 1}, 0, 0, \ldots, 0, 1).
    \]
    When we take this move, all strings remain pairwise less than or equal to $(c_p + 1, c_{p - 1}, \ldots, c_1)$, and so we can never split again, dividing the game played with the $\slcr$ strategy into Phase I and Phase II. 
\end{proof}

\begin{cor}\label{cor:slcr-slcl}
    Due to this division into phases, the $\slcl$ strategy agrees with the $\slcr$ strategy for all of the moves in Phase I. As a corollary, the $\slcl$ strategy takes at least as many moves as the $\slcr$ strategy.
\end{cor}

\begin{proof}
    Because the $\slcl$ strategy agrees with the $\slcr$ strategy when only splits are played, the game proceeds through Phase I with no differences.
    
    Because the number of combines played in a game on an initial state $S$ is constant (see \Cref{cor:niplrs-combines-bound}), we have the desired claim that the $\slcl$ strategy takes at least as many moves as the $\slcr$ strategy for any initial state. In particular, when an extra split is performed in the second phase for the first strategy, it is strictly longer.
\end{proof}

\begin{theorem}\label{thm:niplrs-long-game-initial}
    The $\slcr$ strategy takes $\Omega(n^2)$ moves from the initial state ${}_0(n)$. 
    
    In particular, letting $c_1, \ldots c_k$ be the coefficients of the \NIPLRS under question and $\beta$ be the dominating real root, the Generalized Bergman Game takes at least $\frac{n(n - 2c_1\log_\beta n + 1)}{2(k-1)c_1(C_0 - 1)}$ splits for any $n$. Furthermore, the leftmost summand in the state between Phase I and Phase II is located at an index less than or equal to $-\frac{n}{c_1} + \log_\beta n + 1$, showing that $\widehat{\mathcal{L}}(n) = \Omega(n)$.
\end{theorem}

\begin{proof}
    Let $T$ be the state of the game after the completion of Phase I and before the first combine by \Cref{prop:slcr-phases}. Note first that all the entries of $T$ are less than or equal to $c_1$ because we have finished Phase I. Furthermore, the rightmost summand of $T$ is located at an index less than or equal to $\log_\beta \val(T) = \log_\beta \val({}_0(n)) = \log_\beta n$ by \Cref{prop:niplrs-right-bound}. Furthermore $\chips{T} = n$ because splits preserve the number of chips, and we have performed only splits in Phase I.
    
    We now recall the index sum $\ind(T)$ introduced in \Cref{sec:niplrs-term}. Note that a split always decreases the index sum by at least $C_0 - 1$ and at most $(k - 1)(C_0 - 1)$ (see \Cref{lemma:niplrs-index-sum}). We then know the following upper bound on $\ind(T)$ given by placing $c_1$ summands in each index starting at $\log_\beta n$ and moving left, and placing $n$ summands total:
    \[
        \ind(T) \leq \sum_{j = -\frac{n}{c_1} + \log_\beta n + 1}^{\log_\beta n} c_1j = \frac{n(2c_1\log_\beta n - n + 1)}{2c_1}.
    \]
    Therefore since $\ind({}_0(n)) = 0$, we know that there must have been at least  $\frac{-\ind(T)}{(k-1)(C_0-1)}$ splits. Calculating this based on our upper bound on $\ind(T)$ we see that
    \[
        \# \text{ of splits} \geq \frac{n(n - 2c_1\log_\beta n + 1)}{2(k-1)c_1(C_0 - 1)}.
    \]
    This completes the proof.
\end{proof}


\subsection{Improving the Coefficient on the Dominating Term for the Bergman Game}\label{subs:bound-iteration}

In this subsection, we prove that for the Bergman Game, the coefficients on the dominating term coming from \Cref{thm:niplrs-game-length} and \Cref{thm:niplrs-long-game-initial} agree. That is, we can strengthen \Cref{thm:niplrs-game-length-theta} to the following in this special case.

\begin{prop}\label{prop:phi-game-length-theta}
    For the Bergman Game, \Cref{prop:niplrs-game-length-nonabsolute}--which bounds the length of the longest Bergman game played on ${}_0(n)$ above by $n^2 + 2n\log_\varphi n + n$--is essentially tight.
    
    More precisely, for $n \geq 20$ the length of the longest Bergman game played on ${}_0(n)$ lies in the interval
    \[
        \left[n^2 - 6n -3n\log_\varphi n + 5\log_\varphi n + 2(\log_\varphi n)^2 - 1 , n^2 + 2n\log_\varphi n + n\right].
    \]
    Note specifically the agreement in the coefficient of the dominating terms for each side of the interval.
\end{prop}

We can also strengthen \Cref{thm:niplrs-game-window} in the case of the Bergman Game.

\begin{prop}\label{prop:phi-left-bound-theta}
    \Cref{prop:phi-left-bound}, which tells us that the maximum distance $\mathcal{L}(n)$ that the leftmost summand can move to the left in a Bergman Game on $n$ concentrated summands is bounded above by $2n + \log_\varphi n + 2$ is essentially tight.
    
    More precisely, for $n \geq 20$ we have that this quantity $\mathcal{L}(n)$ lies between $2n - 3\log_\varphi n - 8$ and $2n + \log_\varphi n + 2$.
\end{prop}

As expected from the methods in \Cref{sec:niplrs-long-game}, these propositions are intrinsically linked. In order to prove these statements, we predict the left edge of the game state $T$ occurring after Phase I of the $\slcr$ strategy. The broad outline of the argument consists of two iterative processes which feed into each other. The first iterative process takes in the position of the leftmost summand and guarantees that the left edge has a certain form with a certain number of zeros. Knowing this number of zeros, we can push the position of the leftmost summand further to the left, and then we run the first process as many times as necessary.

To ease reading, we divide this into a few lemmas

\begin{lemma}\label{lemma:phi-easy-edge}
    Let $T$ be some game state consisting entirely of ones and zeros occurring after ${}_0(n)$ in the Bergman Game. Further require that the leftmost summand of $T$ is at a position to the left of index $-\log_\varphi n - 5$. Then the left edge of $T$ must have the form $110\ldots$.
\end{lemma}

\begin{proof}
    We know that eventually this leftmost summand must be pushed right via a combine, as otherwise we would not satisfy the left bound of $-\log_\varphi n-2$ on the final state coming from \Cref{prop:phi-final-left-bound}. Using a $\slcr$ strategy on $T$ gives a game consisting of only combines, and so because combines always move summands right, $T$ must already have a combine at the leftmost edge available. That is we know the left edge of $T$ takes the form $11\ldots$.
    
    We now rule out the possibility that three or more ones show up at the left edge in order to finish the proof. Let $T = \underbrace{11\ldots1}_{s \text{ ones}}0T^r$, where $T^r$ is some sub-state of $T$ and $s \geq 3$. First we will perform all possible combines available in the $s$ ones. To see what state then occurs, we case out on whether $s$ is even or odd.
    \begin{itemize}
        \item Suppose that $s$ is odd. For the sake of simplicity, we work out the case where $s = 5$:
            \begin{center}
                \begin{tabular}{cccccc}
                    1 & 1 & 1 & 1 & 1 & 0\\
                    1 & 1 & 1 & 0 & 0 & 1 \\
                    1 & 0 & 0 & 1 & 0 & 1 
                \end{tabular}
            \end{center}
            Notice that after the first move we are simply taking the available combines for $s = 3$. A simple inductive argument shows that using the $\slcr$ strategy on these types of states has the following effect:
            \[
                \underbrace{1111\ldots1}_{s \text{ ones}}0 \;\; \xmapsto{\slcr} \;\; 100\underbrace{101\ldots01}_{(s - 1)/2 \text{ ones}}.
            \]
            The leftmost summand is not moved in by doing these moves. Further, doing these moves results in a game state $100101\ldots01T^r$ with only zeros or ones. Applying the first piece of this lemma we should have left edge $11\ldots$ in this new state. Clearly this is not the case so $s$ cannot be odd.
        \item Suppose that $s$ is even. For the sake of simplicity, we work out the case where $s = 4$:
            \begin{center}
                \begin{tabular}{ccccc}
                    1 & 1 & 1 & 1 & 0\\
                    1 & 1 & 0 & 0 & 1 \\
                    0 & 0 & 1 & 0 & 1 
                \end{tabular}
            \end{center}
            Notice that after the first move we are simply taking the available combines for $s = 2$. A simple inductive argument shows that using the $\slcr$ strategy on these types of states has the following effect:
            \[
                \underbrace{111\ldots1}_{s \text{ ones}}0 \; \; \xmapsto{\slcr} \; \; 00\underbrace{101\ldots01}_{s/2 \text{ ones}}.
            \]
            The leftmost summand is moved in by two when we do these moves. Further, doing these moves results in a game state $0010101\ldots01T^r$ with only zeros or ones. Applying the first piece of this lemma because $-\log_\varphi n - 3 < -\log_\varphi n - 2$ we should have left edge $11\ldots$ in this new state. Clearly this is not the case when $s > 2$.
    \end{itemize}
    This shows that the left edge must be exactly $110\ldots$ as claimed.
\end{proof}

\begin{lemma}\label{lemma:phi-gen-zeros}
    Let $T$ be some game state consisting entirely of ones and zeros occurring after ${}_0(n)$ in the Bergman Game. Further require that the leftmost summand of $T$ is at a position to the left of index $-L \leq -\log_\varphi n - 3$. Then the left edge of $T$ must have the form 
    \[
        \underbrace{110101\ldots10}_{z_L \text{ zeros}}\ldots
    \]
    where $z_L = \floor{\frac{L - \log_\varphi n - 3}{2}}$. Furthermore all these zeros occur in negative indices because $-L + 2z_L \leq -\log_\varphi n - 3 < 0$.
\end{lemma}

\begin{proof}
    We essentially apply \Cref{lemma:phi-easy-edge} $z_L$ times. Namely if $1 \leq z_L$, then this implies that $2 \leq L - \log_\varphi n - 3$, so there is a summand at a position left of index $-L \leq -\log_\varphi n - 5$. We apply \Cref{lemma:phi-easy-edge} and write $T = 110T^r$, then taking the available combine move to get $T' = 001T^r$. Note that $T'$ consists entirely of ones and zeros and has leftmost summand left of $-L + 2$. If $-L + 2 \leq -\log_\varphi n - 5$ we again apply \Cref{lemma:phi-easy-edge} to predict that $T' = 00110\ldots = 001T^r$, giving that $T^r$ has left edge $10\ldots$. We're then able to see that $T$ has left edge $11010\ldots$.
    
    Running this argument for all $z \in \N$ with $-L + 2z \leq -\log_\varphi n - 5$ then gives the desired result.
\end{proof}

\begin{lemma}\label{lemma:phi-left-bound-iter}
    Let $T$ be some game state consisting entirely of ones and zeros occuring after ${}_0(n)$ in the Bergman Game such that $\chips{T} = n$. Further require that the leftmost summand of $T$ is at a position to the left of index $-L \leq -\log_\varphi n - 3$. Then there is a summand to the left of index $-n - z_L + \log_\varphi n + 1$.
\end{lemma}

\begin{proof}
    Notice that there are at most $\log_\varphi n + 1$ summands in non-negative indices because of the right bound of $\log_\varphi n$ proved in \Cref{prop:niplrs-right-bound}. Applying \Cref{lemma:phi-gen-zeros} we know that the left edge of $T$ has the form
    \[
        \underbrace{110101\ldots10}_{z_L \text{ zeros}}\ldots.
    \]
    All of these zeros occur in negative indices. There are then $n - \log_\varphi n - 1$ summands to be distributed in the negative indices and there are at least $z_L$ zeros in these indices. By the pigeonhole principle there is a summand to the left of index $-n - z_L + \log_\varphi n + 1$ as desired.
\end{proof}

\begin{prop}\label{prop:phi-left-edge}
    Pick $n \geq 20$. Then we have that the left edge of the game state $T$ achieved by playing Phase I of the $\slcr$ strategy on ${}_0(n)$ has the form
    \[
        \underbrace{1101\ldots10}_{z \text{ zeros}}\ldots,
    \]
    where each zero is in a position left of index $-\log_\varphi n - 3$ and $z := n - 2\log_\varphi n - 7$.
\end{prop}

\begin{proof}
    Let $L^{(1)} \coloneqq n - \log_\varphi n - 1$. Then in the spirit of \Cref{lemma:phi-left-bound-iter} recursively define
    \begin{align*}
        z^{(j)} &\coloneqq \frac{L^{(j)} - \log_\varphi n - 5}{2} \tag{$j \geq 1$} \\
        L^{(j)} &\coloneqq n + z^{(j-1)} - \log_\varphi n - 1 \tag{$j \geq 2$}.
    \end{align*}
    The key here is that $z^{(j)} \leq z_{L^{(j)}}$ by the definition of the latter term in \Cref{lemma:phi-gen-zeros}. We then define $L^{(j)}$ in terms of $z^{(j - 1)}$ using the result of \Cref{lemma:phi-left-bound-iter}, motivated by the idea that we will always have a summand to the left of $L^{(j)}$.
    
    We now explicitly compute that
    \begin{align*}
        L^{(j)} &= n + \frac{L^{(j - 1)}}{2} - \frac{3\log_\varphi n}{2} - \frac{7}{2} \\
        L^{(j)} - L^{(j - 1)} &= \frac{L^{(j - 1)} - L^{(j - 2)}}{2} \\
        L^{(J)} - L^{(1)} &= \sum_{j = 0}^{J - 2} L^{(j + 2)} - L^{(j + 1)} = \left(2 - \frac{1}{2^{J - 2}}\right)\left(L^{(2)} - L^{(1)}\right) \\
        L^{(J)} &= \left(2 - \frac{1}{2^{J - 2}}\right)\left(\frac{n}{2} - \log_\varphi n - 3\right) + n - \log_\varphi n - 1 \\
        &= \left(2 - \frac{1}{2^{J - 1}}\right)n - \left(3 - \frac{1}{2^{J - 2}}\right)\log_\varphi n - \left(7 - \frac{3}{2^{J - 2}}\right).
    \end{align*}
    Notice that for $n \geq 20$ we have for all $j$ that $L^{(j)} \geq \log_\varphi n + 3$. Thus, we can apply \Cref{lemma:phi-gen-zeros,lemma:phi-left-bound-iter} repeatedly as many times as we wish to obtain the desired left edge and a summand to the left of $L^{(j)}$ for every $j$. As $j \to \infty$ we have that $L^{(j)}$ approaches $2n - 3\log_\varphi n - 7$ monotonically for $n \geq 20$. Likewise, $z^{(j)}$ approaches $n - 2\log_\varphi n - 6$ monotonically for $n \geq 20$ as $j \to \infty$.
    
    Therefore since the true number of zeros must be an integer, we know that there must be at least $n - 2\log_\varphi n - 7$ zeros as claimed, and they must be arranged as above by the specific claims about the left edge made in \Cref{lemma:phi-gen-zeros}.
\end{proof}

We can then immediately conclude from the pigeonhole principle that there is a summand in $T$ to the left of index $-2n + 3\log_\varphi n + 8$, confirming \Cref{prop:phi-left-bound-theta}.

Furthermore, we can also immediately conclude \Cref{prop:phi-game-length-theta} using the techniques of \Cref{sec:niplrs-long-game}. Namely, we know that in $T$ there are summands $110101\ldots01$ at indices to the left of $-2n + 3\log_\varphi n + 8$ and $-2n + 3\log_\varphi n + 7 + 2j$ for $1 \leq j \leq n - 2\log_\varphi n - 7$. This implies that $-\ind(T)$ is bounded below by
\[
    2n - 3\log_\varphi n -8 + \sum_{j = 1}^{n - 2\log_\varphi n - 7} \left(2n - 3\log_\varphi n - 7 -2j\right) - \log_\varphi n.
\]
Evaluating this sum gives
\[
    -\ind(T) \geq n^2 - 6n -3n\log_\varphi n + 5\log_\varphi n + 2(\log_\varphi n)^2 - 1.
\]
We know that each split decreases the index sum by exactly one in the Bergman Game, and so this tells us that there are at least this many splits, implying \Cref{prop:phi-game-length-theta}.

\begin{rmk}
    We are interested in generalizing this argument to other \NIPLRS. The difficult mainly comes from generalizing \Cref{lemma:phi-easy-edge}. Here we were able to make a great degree of progress simply by ruling out the possibility that we have all ones because in the Bergman game there is a binary choice for the value of each entry. For other \NIPLRS, the arguments become much more involved as there are $c_1$ choices for the value of each entry in $T$.
\end{rmk}

\subsection{Construction of an $O(\chips{S})$ Bergman Game}\label{subs:bergman-short-game}

Here, we construct a $O(\chips{S})$ Bergman Game given arbitrary initial state.  We thus establish a tight bound $\Theta(n)$ on the shortest number of moves which can ensure completion of a game with $n = \chips{S}$ summands arranged in any initial state, for all $n$, and thus we prove \Cref{thm:bergman-game-length-theta-lower}.  We play this game via the following strategy.

\begin{defn} \label{def:quick-term-strat}
    The Quick Termination ($\mathcal{Q}\mathcal{T}$) strategy instructs the players to choose moves in the following order of priority.
    \begin{enumerate}
        \item Combine.
        \item Split anywhere with more than two summands.
        \item Split at the 2 in a configuration $(\dots,a_1,a_2,0,2,0,a_3,\dots), a_1>0 \text{ or } a_3>0$.
        \item Split at the 2 in a configuration $(\dots,0,a_2,0,2,0,0,\dots), a_2\geq 2$.
        \item Split at the rightmost possible index.
    \end{enumerate}
\end{defn}

\begin{prop}
    The $\mathcal{Q}\mathcal{T}$ strategy always produces a game  of length at most $4\chips{S}$ moves from any starting state with $\chips{S}$ summands.
\end{prop}

\begin{proof}
    Let $n=\chips{S}$.  We may only combine at most $n$ times.  Furthermore, when we take (2) or (3) in \Cref{def:quick-term-strat}, we immediately create opportunity for a combine.  Additionally, taking (4) creates an index with three summands, at which we then split (3) and subsequently combine.  Therefore, doing either (1), (2), (3), or (4) causes us to combine within the next three moves.  As such, we may only make $3n$ total of these moves.
    
    We now bound the number of (5) moves.  By construction, the game state $S$ which precedes a (5) move contains only 0's, 1's, and 2's and no adjacent summands, and every 2 is surrounded by a neighborhood $(\dots,0,a_2,0,2,0,0,\dots)$, where $a_2<2$.  Then the neighborhood of any 2 will look like $(a_1,0,0,1,0,1,0,\dots,1,0,1,0,2,0,0)$.  We call these configurations $A_k$, where $k$ is the number of non-zero indices in the configuration, excluding $a_1$.  Call the configuration $(1,0,1,0,\dots,1,0,1)=B_k$, where again $k$ refers to the number of non-zero indices.  In general then, any game state just prior to making a (5) move is a finite string of disjoint $A_k$'s and $B_k$'s, padded with possible intervening zeros.
    
    By \Cref{def:quick-term-strat}, we first split at the right-most possible index when we make a (5) move, which is also the 2 of the right-most $A_k$.  We do not create an opportunity for moves (1)-(4) but rather create an $A_{k-1}$ two indices to the left of the original $A_k$, so inductively, we make $k$ splits in a row from right to left and convert our $A_k$ into a state $(a_1,1,0,0,1,0,1,0,\dots,1,0,1,0,1,0)$.  Regardless of the value of $a_1$, we may perform no more (5) moves in this region of $S$; depending on $a_1$, we may play one more combine, and then we will have no more moves in this region.  Therefore, playing on each $A_k$ provides at most $k$ consecutive (5) moves.  Furthermore, these moves do not alter any part of $S$ to the right of $A_k$.  Therefore, once we play our $k$ splits on an $A_k$, we never play on any indices to the right of its left edge for the remainder of the game.
    
    Then the total number of (5) moves is exactly the total number of split moves played on all the $A_k$ which ever form. Each $A_k$ gives $k$ (5) moves, which is less than $k+1$ the number of summands in an $A_k$.  Then the total number of option (5) moves is less than the total number of summands in all the $A_k$, which is at most $n$.  We have $3n$ moves from options (1)-(4) and $n$ from (5), giving a linear bound of $4n$ total moves from any initial state.
\end{proof}

We therefore deduce that for the Bergman Game, $\hat{B'}(n)=O(n)$.  Together with \Cref{thm:game-length-theta-lower}, \Cref{thm:bergman-game-length-theta-lower} follows: in the Bergman case $\hat{B'}(n) = \Theta(n)$.

\section{Future Work}\label{sec:future}

Our research provides a number of directions for possible future work, roughly divided into five categories: generalizing bound iteration, proving claims about specific strategies, improving the left bounds for general \NIPLRS, game theoretic concerns, the behavior of random games, and defining a game on general \PLRS.

\subsection{Improving Bounds and Exploring Specific Strategies}

A generalization of bound iteration would entail closing the gap between the coefficients on the dominating terms for both the left bound and the termination time (see \Cref{cor:niplrs-left-bound} and \Cref{prop:niplrs-game-length-nonabsolute} respectively). The bound iteration process detailed in \Cref{subs:bound-iteration} allows us to do this in the case of the Bergman Game. However, the argument heavily relies on the fact that after Phase I of the $\slcr$ strategy we know that there are only zeros or ones in play. For other \NIPLRS, there may be entries ranging from zero to $c_1$, which creates more variability to account for in the arguments.

While we are able to demonstrate strategies which terminate in $\Theta(n^2)$ time for general PLRS, we only provide a $\Theta(n)$ game for the Bergman Game (see \Cref{sec:niplrs-long-game} and \Cref{subs:bergman-short-game}).  Future work may generalize the latter.  We have also not been able to show that any particular strategy takes the most moves or the least moves. Showing such a statement would likely rely on the quantities like $\chips{S}$ and $\ind(S)$ that we have established here, but would almost certainly require a host of new ideas as well.

To improve the left bounds for general \NIPLRS would entail removing the dependence on the value of a game state $S$ inherent in the bound on the index of the leftmost summand for the final state proved via Algebraic Number Theory (see \Cref{cor:niplrs-final-left-bound}). Instead one may be able to prove a bound similar to the one demonstrated in the special case of the Bergman Game (see \Cref{prop:phi-final-left-bound}) which depends only on the number of summands $\chips{S}$. This would allow us to improve the coefficients on the dominating $\chips{S}^2$ term for the bound on the game length in \Cref{thm:niplrs-game-length}. Alternatively, a counterexample to this claim could be found, which would further solidify the special properties of $\varphi$ that crop up when proving things about the Bergman Game.

\subsection{Game Theoretic Concerns, Random Games, and \PLRS games}

As far as game theoretic concerns. We are interested in showing that a winning strategy exists for either player--either for the Bergman Game or in general. Furthermore, ideally the existence of such strategies would depend on the initial state in a predictable or computable way. We expect that a proof that either player has a winning strategy would be non-constructive, as in the case of the Zeckendorf Game itself (see \cite{baird-smithZeckendorfGame2020}). The strategy-stealing techniques used to show the existence of a winning strategy for the Zeckendorf Game--at least by direct examination of the game tree--seem untenable due to the quickly growing number of unique states observed throughout a game. See \Cref{fig:bergman-game-tree} for an example of such a tree going to a depth of three moves in the case of the Bergman Game. Every increase in depth from this point forward will spawn so many distinct nodes that the argument from \cite{baird-smithZeckendorfGame2020} becomes intractable.

\begin{figure}[ht]
    \centering
    \scalebox{0.75}{\begin{tikzpicture}[cell/.style={rectangle,draw=black},
space/.style={minimum height=0.9em,matrix of nodes,row sep=-\pgflinewidth,column sep=-\pgflinewidth,column 1/.style={font=\ttfamily}}, scale=0.5, every node/.style={transform shape}]
        \matrix (first) [space, column 1/.style={nodes={cell,minimum width=2em}}]
        {
            $n$ \\
        };
        \matrix (second) [below=of first, space, column 1/.style={nodes={cell,minimum width=2.em}}, column 2/.style={nodes={cell,minimum width=2em}}, column 3/.style={nodes={cell,minimum width=2em}}, column 4/.style={nodes={cell,minimum width=2em}}]
        {
            1 & 0 & $n - 2$ & 1 \\
        };
        \draw[->, red] (first) -- (second);
        
        \matrix (thirdc) [below right=of second, space, column 1/.style={nodes={cell,minimum width=2em}}, column 2/.style={nodes={cell,minimum width=2em}}, column 3/.style={nodes={cell,minimum width=2em}}, column 4/.style={nodes={cell,minimum width=2em}}, column 5/.style={nodes={cell,minimum width=2em}}]
        {
            1 & 0 & $n - 3$ & 0 & 1 \\
        };
        
        \matrix (thirds) [below left=of second, space, column 1/.style={nodes={cell,minimum width=2em}}, column 2/.style={nodes={cell,minimum width=2em}}, column 3/.style={nodes={cell,minimum width=2em}}, column 4/.style={nodes={cell,minimum width=2em}}]
        {
            2 & 0 & $n - 4$ & 2  \\
        };
        
        \draw[->, blue] (second) -- (thirdc);
        \draw[->, red] (second) -- (thirds);

        \matrix (thirdsc) [below=of thirdc, space, column 1/.style={nodes={cell,minimum width=2em}}, column 2/.style={nodes={cell,minimum width=2em}}, column 3/.style={nodes={cell,minimum width=2em}}, column 4/.style={nodes={cell,minimum width=2em}}, column 5/.style={nodes={cell,minimum width=2em}}]
        {
            2 & 0 & $n - 5$ & 1 & 1 \\
        };
        
        \matrix (thirdss) [below=of thirds, space, column 1/.style={nodes={cell,minimum width=2em}}, column 2/.style={nodes={cell,minimum width=2em}}, column 3/.style={nodes={cell,minimum width=2em}}, column 4/.style={nodes={cell,minimum width=2em}}, column 5/.style={nodes={cell,minimum width=2em}}]
        {
            3 & 0 & $n - 6$ & 3 \\
        };
        
       \matrix (thirdssl) [below left=of thirds, space, column 1/.style={nodes={cell,minimum width=2em}}, column 2/.style={nodes={cell,minimum width=2em}}, column 3/.style={nodes={cell,minimum width=2em}}, column 4/.style={nodes={cell,minimum width=2em}}, column 5/.style={nodes={cell,minimum width=2em}}, column 6/.style={nodes={cell,minimum width=2em}}]
       {
       1 & 0 & 0 & 0 & $n - 4$ & 2 \\
       };
        
       \matrix (thirdssr) [below right=1cm and 0.5cm of thirds, space, column 1/.style={nodes={cell,minimum width=2em}}, column 2/.style={nodes={cell,minimum width=2em}}, column 3/.style={nodes={cell,minimum width=2em}}, column 4/.style={nodes={cell,minimum width=2em}}, column 5/.style={nodes={cell,minimum width=2em}}]
       {
           2 & 1 & $n - 4$ & 0 & 1\\
       };
       
        \draw[->, red] (thirdc) -- (thirdsc);
        \draw[->, red] (thirds) -- (thirdss);
        \draw[->, blue] (thirds) -- (thirdsc);
        \draw[->, red] (thirds) -- (thirdssl);
        \draw[->, red] (thirds) -- (thirdssr);
    \end{tikzpicture}}
    \caption{A Game Tree for the Bergman Game of Depth Three.}
    \label{fig:bergman-game-tree}
\end{figure}
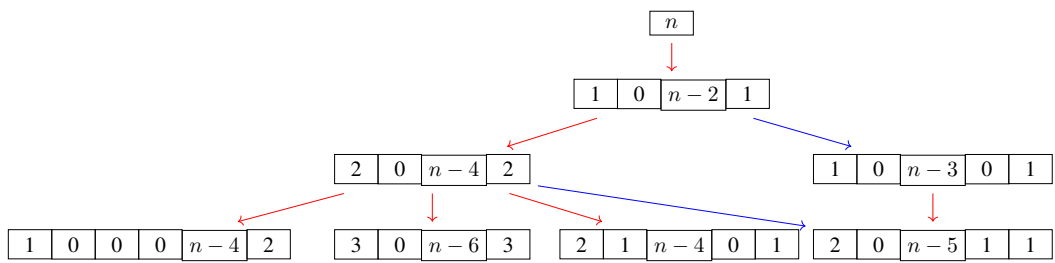

The behavior of games wherein each player makes one of all of their available moves with uniform probability at each term is currently completely unexplored. Through numerical analysis, we have been able to make conjectures concerning their behavior. However, there are no concretely proven results. This closely reflects the state of study for random Zeckendorf games (see \cite{baird-smithZeckendorfGame2020}). Our primary conjecture concerning game length says that as the number of summands grows the distribution of the number of moves used in a random game approaches a Gaussian (see \Cref{conj:gaussian-random-game}). We expect that the current techniques for proving things about these games will not easily apply to random games. The numerics for game length are explored in detail in \Cref{sec:numerics-random}. However, one possible direction is to examine the expected change in the important quantities $\chips{T}$, $\ind(T)$, and $\gap(T)$ throughout a given random game.

One could also define a Generalized Bergman Game on any \PLRS. The main difficulty in doing so arises when trying to generalize the split moves. Intuitively, the split move consists of a combine ``in reverse'' and then a combine forwards, as shown below for the Bergman Game:
\[
    (0, 0, 2, 0) \xrightarrow{\text{\quad Reverse Combine\quad}} (1,1,1, 0) \xrightarrow{\text{\quad Combine\quad}} (1, 0, 0, 1).
\]
In the case of \NIPLRS, the combine step here produces the $d_{p, j} = c_j - c_{p + j}$ entries found throughout this paper. However for other \PLRSs, these entries might be negative, which violates the rules of the game. 

\notdone{Shorten Future Work}

\appendix

\section{Numerics of Random Games}\label{sec:numerics-random}

Similarly to the numerics collected on the Zeckendorf Game in \cite{baird-smithZeckendorfGame2020}, we can consider the length of a random game played from an initial starting state ${}_0(n)$. For ease of coding, we only explored such numerics for the Bergman Game. However, we expect that similar numerical analysis of Generalized Bergman Games would yield similar results.

\begin{rmk}
    We choose to take the initial starting state to be ${}_0(n)$ so that we have a natural one-dimensional parameter to vary, as the limiting behavior if of the most interest here.
\end{rmk}

For clarity, we define what we mean by a random game.
\begin{defn}\label{defn:random-game}
    To play random game on an initial state $S$, list every available move from this initial state, and play one with a uniform probability. Continue this process from each new state created to generate a tree with probabilities attached to each edge. The probability of achieving a certain game (i.e., a path down the tree) is the product of all the probabilities associated to each edge.
\end{defn}

We then form the following conjecture based on the data collected from our C++ code, which can be found at 
\begin{center}
    \url{https://github.com/FayeAlephNil/public-bergman-code}
\end{center}
For any questions about the code, please email Faye Jackson (\href{mailto:alephnil@umich.edu}{alephnil@umich.edu}).

\begin{conjecture}\label{conj:gaussian-random-game}
    The distribution $\mathcal{L}_n$ of the length of a random game payed from the initial state ${}_0(n)$ converges to a Gaussian with linear mean $\mu_n \approx 2n + O(1)$. 
\end{conjecture}

This conjecture is supported by \Cref{fig:gaussian-random-game-1000,fig:gaussian-random-game-2000,fig:linear-mean-random-game-1000}. 

\begin{figure}[ht]
    \centering
    \includegraphics[scale=1.25]{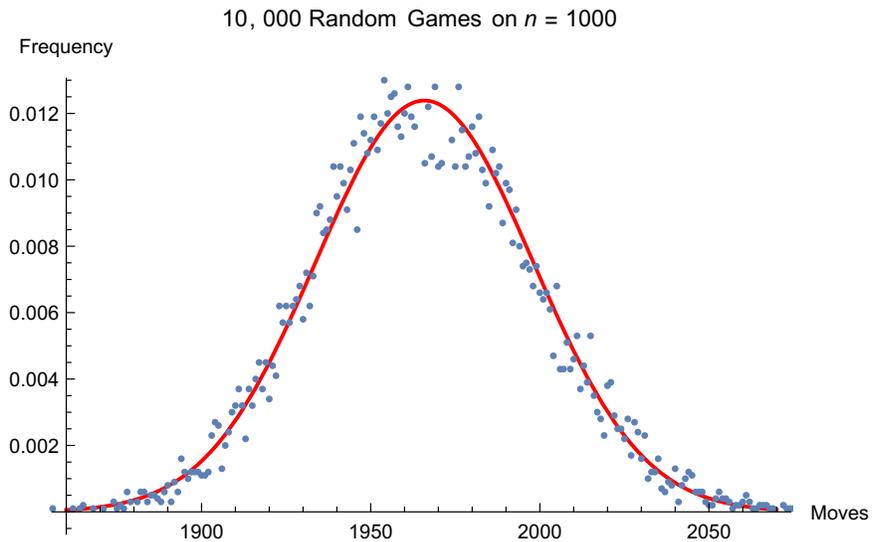}
    \caption{Frequency Graph of the Number of Moves in 10,000 simulations of the Bergman Game with random moves when $n = 1000$ with the best fit Gaussian overlaid in red.}
    \label{fig:gaussian-random-game-1000}
\end{figure}

\begin{figure}
    \centering
    \includegraphics[scale=1.25]{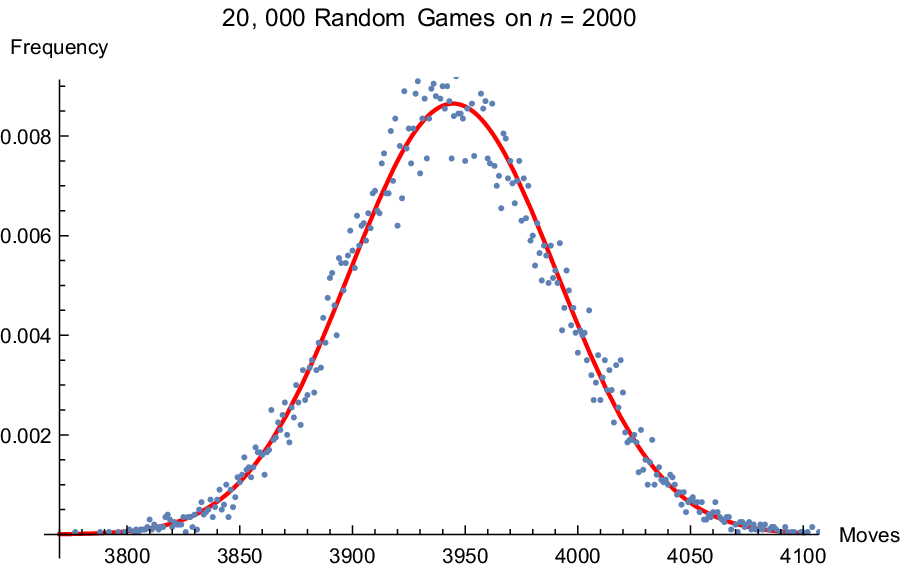}
    \caption{Frequency Graph of the Number of Moves in 20,000 simulations of the Bergman Game with random moves when $n = 2000$ with the best fit Gaussian overlaid in red.}
    \label{fig:gaussian-random-game-2000}
\end{figure}

\begin{figure}
    \centering
    \includegraphics[scale=1.25]{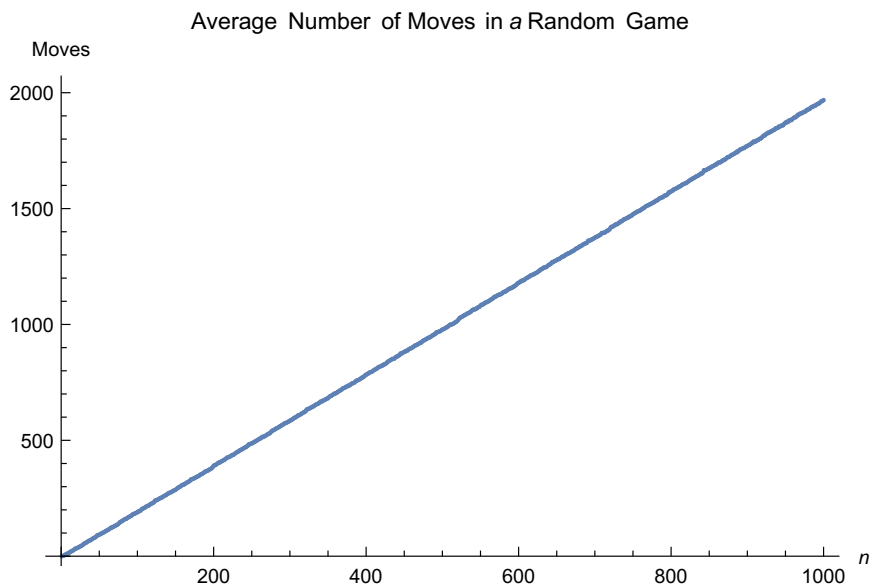}
    \caption{Graph of the average number of moves in random Bergman Games with initial state $n \in [1, 1000]$, averaging over 1,000 trials.}
    \label{fig:linear-mean-random-game-1000}
\end{figure}
The conjecture is also supported by the moments of the statistical sample from random games. After renormalizing to have mean zero and variance one the 20,000 random trials on $n = 2000$ give the following table of moments:
\begin{center}
\begin{tabular}{c||c|c|c|c|c|c}
     Moments & 3 & 4 & 5 & 6 & 7 & 8  \\
     \hline
     Data & 0.08 & 3.03 & 0.90 & 15.74 & 12.19 & 120.50 \\
     Gaussian & 0 & 3 & 0 & 15 & 0 & 105
\end{tabular}
\end{center}
Given the close agreement with the moments of the Gaussian, we have confidence that the first statement of the conjecture holds. For the second statement in the conjecture, numerics show that the best fit line to the average number of moves in a random game has linear coefficient $1.98$ with coefficient of determination $0.999991$, indicating a near perfect fit. In fact, we neglected to plot the best fit line in \Cref{fig:linear-mean-random-game-1000} because it was too difficult to see at the same time as the data.

Despite the strong numerical evidence, we expect that this conjecture is rather difficult to prove. Intuitively the length of a random game matches a Gaussian because so many independent random choices are performed as we traverse the tree. However, there is no clear recursive structure (i.e. a martingale interpreation) nor a way to express the process as a sum of binomial random variables in order to exploit a Central Limit Theorem and thereby prove the result. This puts the problem beyond the reach of our current techniques.

\section{Approximating Zero in base \texorpdfstring{$\psi = -1/\varphi$}{psi = -1/phi}}\label{sec:mn}

A natural question we initially ran into when trying to prove a left bound on the Bergman Game was how well one could approximate zero with finite power series $\sum_{i \geq 0} x_i\psi^i$ whose coefficients are all non-negative and $x_0 \geq 1$. This is formalized by the following definition.

\begin{defn}\label{defn:mn}
    For $n\geq 0$, let $\mathcal{N}(n)$ denote the set of all game states $S$ with $ \chips{S} \leq n$ and $S(i) = 0$ for $i<0$ (that is states which are nonzero only in non-negative indices). We then define
    \[
        m_n \coloneqq \inf_{S\in \mathcal{N}(n)} \abs{1 +  \valt(S)}.
    \]
    \notdone{BEN: Maybe this is secretly about approximating $-1$ in base $\psi?$}
\end{defn}

\begin{prop}\label{prop:mn-value}
    We have $m_n = \varphi^{-2n} = \psi^{2n}$
\end{prop}

Because we were able to calculate this explicitly, we include the calculation here, as well as how it allowed us to get a left bound on our games. However, the reader should note that this left bound is generalized by \Cref{cor:niplrs-left-bound}, and the proof provided there is more elementary when taken together with \Cref{prop:phi-final-left-bound}. Thus, this section survives only as an interesting calculable quantity. Notably, we have not been able to generalize this calculation to other \NIPLRS--even those whose coefficients all agree.

Assuming \Cref{prop:mn-value}, we may calculate a left bound.
\begin{cor}\label{cor:mn-left-bound}
    Let $S$ be some game state with leftmost summand at index zero and let $T$ be a game state occuring after $S$ in the Bergman Game. Then the leftmost summand of $T$ is at an index greater than or equal to $-2\chips{S} - \log_\varphi \chips{S} + 2$.
\end{cor}

\begin{proof}
    Let the leftmost summand of $T$ be located at index $a$, then we may write the following
    \begin{align*}
        \chips{S} &\geq \abs{\valt(S)} = \abs{\valt(T)} = \abs{\psi}^{a}\cdot\abs{1 + \valt({}_0T - {}_0(1))} \\
                  &\geq \abs{\psi}^{a} \cdot m_{\chips{T} - 1} \geq \abs{\psi}^{a}m_{\chips{S} - 1} = \abs{\psi}^{a + 2\chips{S} - 2}.
    \end{align*}
    Taking the $\log_\varphi$ on both sides yields $\log_\varphi \chips{S} \geq -a - 2\chips{S} + 2$, and rearrangement gives the desired bound.
\end{proof}

First we must establish that each $m_n > 0$ before we calculate the value.

\begin{lemma}\label{lemma:valt-nonneg}
    If $ \chips{S}>0$, then $ \valt(S) > 0$.
\end{lemma}

\begin{proof}
    Take a state ${}_aS_b$ with $ \chips{S} > 0$. We see that:
    \begin{align*}
        \valt(S) = \sum_{j = a}^b S(j)\psi^j = \psi^{a} \cdot \sum_{j = 0}^{b - a} S(j + a)\psi^j.
    \end{align*}
    Since $\psi^{a}$ is nonzero and all these terms are non-negative it suffices to show that $\sum_{j = 0}^{b - a} S(j + a)\psi^j$ is nonzero. Call $r = b - a$ and $y_j = S(j + a)$ for convenience.

    Note that $\Z[\varphi] \cong \Z[\psi]$ because $\varphi$ and $\psi$ are both roots of the irreducible polynomial $x^2 - x - 1$. Furthermore the function $f : \psi \mapsto \varphi$ uniquely defines a ring isomorphism $\Z[\psi] \to \Z[\varphi]$. Now we observe that:
    \begin{align*}
        f\left(\sum_{j = 0}^r y_j\psi^j\right) &= \sum_{j = 0}^r y_j\varphi^j \geq  \chips{S} > 0.
    \end{align*}
    Thus since $f$ is an isomorphism of rings, $\sum_{j = 0}^r y_j\psi^j \neq 0$. Therefore--by the reductions above--we win and obtain that $\valt(S) \neq 0$.

    It follows $ \valt(S) \neq - \valt(T)$ for any configurations $S, T$ with $ \chips{S}>0$ or $\chips{T} > 0$, as otherwise $ \valt(S+T) =  \valt(S) +  \valt(T)=0$ whereas $\chips{S + T} =  \chips{S} + \chips{T} > 0$.
\end{proof}

\begin{prop}\label{prop:mn-nonzero}
For every $n$ the constant $m_n$ is greater than zero.
\end{prop}

\begin{proof}
    We will prove this result by induction. Note that we trivially have that $m_0 = 1 > 0$, and likewise $m_1 = 1 -\abs{\psi} > 0$. For the inductive hypothesis, let $n \geq 2$ and $m_{n-1} > 0$.

    Now let $J>0$ such that $\abs{\psi^j} < \frac{m_{n-1}}{2}$ for all $j\geq J$, and let ${}_aS_b \in \mathcal{N}(n)$ with $S(a), S(b)>0$. There are finitely many instances $S$ with $b<J$. Why? Well, $ \chips{S} \leq n$, and so we must distribute at most $n$ summands among at most $J$ places, in the worst case where $a = 0$ and $b = K - 1$. 

    Let $\alpha$ denote the minimum $\abs{1 +  \valt(S)}$ over such $S$.  Note that 
    \[
        \abs{1 +  \valt(S)}= \abs{\valt({}_0(1)) +  \valt(S)} = \abs{\valt({}_0(1) + S)}.
    \]
    Thus by \Cref{lemma:valt-nonneg}, $1+ \valt(S)\neq 0$ and so $ \valt(S)\neq-1$ and $\alpha>0$. 
    
    If $b\geq J$ for some ${}_aS_b$, then the instance $S' = (S(a), \dots, S(b)-1)$ satisfies $\abs{1 +  \valt(S')} \geq m_{n-1} > 0$ by the induction hypothesis. By the triangle inequality
    \begin{align*}
        \abs{1 +  \valt(S)} &\geq \min(\alpha, \abs{1 +  \valt(S') + \psi^b}) \\
        &\geq \min(\alpha, \abs{1 +  \valt(S')} - \psi^b|) \\
        &\geq \min\left(\alpha, \frac{m_{n-1}}{2}\right) > 0.
    \end{align*}
    This finishes the inductive step.
\end{proof}

We may now proceed towards the explicit calculation.

\begin{lemma}\label{lemma:game-state-reduction}
    In our definition of $m_n$ we take an infimum over all non-negative game states $S$ with $ \chips{S} \leq n$. Instead, we show that we can take an infimum over reduced game states $S$, which only involve entries which are zero or one and also do not contain adjacent ones. Call this set $\mathcal{N}_R(n)$.
    
    Formally we show that $m_n = \inf_{S \in \mathcal{N}_R(n)} \abs{1 + \valt(S)}$
\end{lemma}

\begin{proof}
    Since $\mathcal{N}_R(n) \subseteq \mathcal{N}(n)$ it is trivial that:
    \begin{align*}
        \inf_{S\in \mathcal{N}(n)} \abs{1 +  \valt(S)} \leq  \inf_{T_r\in \mathcal{N}_R(n)} \abs{1 +  \valt(T_r)}
    \end{align*}
    We wish to derive the other inequality. To do so, it suffices to show that for any $S \in \mathcal{N}(n)$ there exists some state $T_r \in \mathcal{N}_R(n)$ such that $\abs{1 +  \valt(T_r)} \leq \abs{1 +  \valt(S)}$.

    We start with a process of ``quasi-reduction.'' Begin with some non-negative game state $S$ with $ \chips{S} \leq n$. Then by \Cref{prop:niplrs-term}, we may play the Bergman game on $S$ in order to get a reduced configuration $S_f$. By the invariance of $\valt$ we have that $\abs{1 + \valt(S)} = \abs{1 + \valt(S_f)}$.
    
    However, $S_f$ may contain nonzero entries in negative indices. We wish to get around this.  Write $S_f = (S_f(a), \ldots, S_f(0), \ldots, S_f(b))$. If $a \geq 0$ we're done and $S_f$ is a reduced game state in the non-negative indices with $\abs{1 + \valt(S_f)} = \abs{1 + \valt(S)}$.
    
    Suppose $a < 0$. Then we may ``pull the one'' into our game state by writing 
    \[
        T^0 := {}_0(S_f)_{b - a} - {}_0(1) + {}_{-a}(1).
    \]
    If $T^0$ is reduced, then we are done, as $T^0 \in \mathcal{N}_R(n)$ and 
    \begin{align*}
        \abs{1 + \valt(T^0)} &= \abs{1 - 1 + \psi^{-a} + \psi^{-a}\valt(S_f)} = \abs{\psi}^{-a}\abs{1 + \valt(S_f)} \\
                            &\leq \abs{\psi}\abs{1 + \valt(S)} < \abs{1 + \valt(S)}.
    \end{align*}
    However, $T^0$ might not be reduced. If $S_r(-a) = 1$ then $T^0$ will contain a two. In this case, we note that $T^0 \in \mathcal{N}(n)$ because $\chips{T^0} = \chips{S_f} - 1 + 1 \leq \chips{S}$. Therefore, we may iterate this process again, winning if $T^0_f$ has nonzero entries only in non-negative indices and otherwise constructing a $T^1 \in \mathcal{N}(n)$ with
    \[
        \abs{1 + \valt(T^1)} < \abs{\psi}\abs{1 + \valt(T^0)} < \abs{\psi}^2\abs{1 + \valt(S)}.
    \]
    If it terminates at any stage, i.e. if $T^j$ is reduced for some $j$ or $T^j_f \in \mathcal{N}(n)$ for some $j$ then we win by chasing the inequalities. If it does not terminate, then since $T^j \in \mathcal{N}(n)$ for all $j$, we will discover that we've created a contradiction to \Cref{prop:mn-nonzero}:
    \[
        \abs{1 + \valt(T^k)} < \abs{\psi}^k\abs{1 + \valt(S)}.
    \]
    But then we know that $m_n = 0$ because we may take $k \to \infty$ and $\abs{\psi} < 1$. Thus the process must terminate and we win!
\end{proof}

We are now prepared to move on to a proof of \Cref{prop:mn-value}. First, we write down a particularly important game state, which achieves the written bound

\begin{defn}\label{defn:prepped-to-combine}
    Define $P^n$ to be a game state with $P^n(2j - 1) = 1$ for all integers $1 \leq j \leq n$, and $P^n(j) = 0$ otherwise. Note that $\chips{P^n} = n$. We call such a state ``prepped to combine'' because the game state ${}_0(1) + P^n$ may be combined repeatedly to reach the game state ${}_{2n}(0)$. As demonstrated below for $n = 3$:
    \begin{center}
     \begin{tabular}{ccccccc}
         $\varphi^0$ & $\varphi^1$ & $\varphi^2$ & $\varphi^3$ & $\varphi^4$ & $\varphi^5$ & $\varphi^6$ \\
         \hline
         1 & 1 & 0 & 1 & 0 & 1 & 0 \\
         0 & 0 & 1 & 1 & 0 & 1 & 0 \\
         0 & 0 & 0 & 0 & 1 & 1 & 0 \\
         0 & 0 & 0 & 0 & 0 & 0 & 1 
     \end{tabular}
    \end{center}
\end{defn}

\begin{lemma}\label{lemma:prepped-to-combine-reduced}
    For any reduced game state $S$ whose summands are all in positive indices (i.e. $S(j) = 0$ for $j \leq 0$) we have $\valtabs(P^{\chips{S}}) \geq \valtabs(S)$.
    
    Furthermore, $\valtabs(P^{\chips{S}}) = 1 - \abs{\psi}^{2\chips{S}}$ by a simple calculation.
\end{lemma}

\begin{proof}
    For completeness we include the calculation of $\valtabs(P^{\chips{S}})$:
    \[
        \valtabs(P^{\chips{S}}) = \sum_{j = 1}^{\chips{S}} \abs{\psi}^{2j - 1} = \frac{\abs{\psi}(1 - \abs{\psi}^{2\chips{S}})}{1 - \abs{\psi}^2} = 1 - \abs{\psi}^{2\chips{S}}.
    \]
    We now move to showing the first statement.

    Notice that no two consecutive entries of $S$ are one and all entries of $S$ are less than or equal to one by definition. Then because $j \mapsto \abs{\psi}^j$ is a decreasing function, the most efficient arrangement is provided by placing a summand at index one, a summand at index three, and so on, just as specified by $P^{\chips{S}}$. Therefore $\valtabs(P^{\chips{S}}) \geq \valtabs(S)$.
\end{proof}

\begin{proof}[Proof of \Cref{prop:mn-value}]
    First, to show that $m_n\leq \varphi^{-2n}$, we calculate $\abs{1 + \valt(P^n)}$:
    \begin{align*}
        \abs{1 +  \valt(P^n)} = \abs{{}_0(1) + \valt(P^n)} = \abs{\valt({}_0(1) + P^n)} = \abs{\valt({}_{2n}(1))} = \varphi^{-2n}.
    \end{align*}
    Next, let $S \in \mathcal{N}(n)$. By \Cref{lemma:game-state-reduction}, we can assume that $S \in \mathcal{N}_R(n)$. To show that $\abs{1+ \valt(S)}\geq \varphi^{-2n}$, we will consider the function $\valtabs$. By the triangle inequality, $\abs{\valt(T)}\leq \valtabs(T)$ for all game states $T$.  
    
    We now divide into cases depending on if $S(0) = 1$. Repeatedly using \Cref{lemma:prepped-to-combine-reduced}.
    \begin{itemize}
        \item Suppose that $S(0) = 1$. Let $S'$ denote $S$ after removing the summand in the zeroth index. Then
            \begin{align*}
                \abs{1+ \valt(S)} &= \abs{2+ \valt(S')} \geq 2 - \valtabs(S') \\
                &\geq 2 - \valtabs(R_{0,\chips{S'}}) = 1 + \abs{\psi}^{2\chips{S'}} > \varphi^{-2n}.
            \end{align*}
            Thus $S$ performs worse than our explicit construction $P^n$ above.
        \item Suppose that $S(0) = 0$. Then we have that
            \[
                 \abs{1+ \valt(S)} \geq 1 - \valtabs(S) = 1 - \valtabs(P^n) = \abs{\psi}^{2\chips{S}} \geq \varphi^{-2n}.
            \]
            Thus $S$ performs either worse than or as well as $P^n$.
    \end{itemize}
    This then implies that $m_n\geq \varphi^{-2n}$. Together, these two pieces complete the proof.
\end{proof}

\section{Termination of a wide class of Mass-Conservative Games}\label{sec:term-mass-cons}

Thus far, we have examined games on the infinite tuple arising from recurrence relations and questions in number theory, and we have examined why and how quickly these games terminate.  We now turn to a natural generalization of this line of study: \textit{Which possible move-sets for some game on the tuple ensure that the game terminates?}  We begin with a couple of definitions. In this section, we refer to the entries in the tuple as ``chips'' or ``tokens'' rather than as a summand. We do so because there is no intrinsic connection to base $\beta$ decompositions within this general class of games, and so using the word ``summand'' gives the wrong connotation.  So as to properly and rigorously define our conditions for termination, we give more specific definitions for certain terms to be used in this appendix.

\begin{defn}
    Given any two states on the tuple $S,T$, let a move denote the immediate replacement of $S$ with $T$.
\end{defn}

\begin{defn}
    Let $m$ denote a certain set of moves in a game on the tuple.  We say $m$ is a locally defined move (LDM) if $m$ can be defined in the following way:  $m$ has an assigned `initial state' $m_i$, a finite string on the tuple.  Let $s_i$ be the set of indices at which $m_i$ is non-zero.  Further, $m$ has a `final state' $m_f$ which is another finite string on the tuple.  Akin to before, let $s_f$ be the set of indices at which $m_f$ is non-zero.  The move defined by $S \rightarrow S-m_i+m_f$ is an LDM.  Note that if no subset of the summands on the tuple before the move takes place constitute $m_i$, then the move cannot be performed as we do not allow negative numbers of chips.
\end{defn}

\begin{defn}
    Given an LDM $m$, let $m^k$ refer to the move derived from $m$ by shifting $m_i$ and $m_f$ both $k$ indices to the right. Specifically, $m=m^0$.  We may then define the translation class of $m$ as $M=\{m^k:k\in\Z\}$.
\end{defn}

\begin{example}
    We may create a move type with $m_i = _0(2)$ and $m_f = {}_{-2}(1,0,0,1)_1$.  The following represents an instance of this move being performed:
    \begin{align*}
        _{-4}(3,5,0,0,3,0,0,2)_3 \rightarrow _{-4}(3,5,1,0,1,1,0,2)_3.
    \end{align*}
    Notice that the move may not be performed from a state like $_{-1}(1,1,2,1)_3$ because no subset of the chips form $m_i = _0(2)$.  Also recall that this example move is in fact the split move at the 0-index from the AZG.  The translation class $M$ of $m$ is then the split move more generally, performed at any index.
\end{example}

\begin{rmk}
    In the specific games studied elsewhere in our paper, we have thought of translation classes of LDMs simply as moves themselves (like the split or combine moves), and so when the context is clear, we will continue to use the singular "move" to discuss a translation class.
\end{rmk}

\begin{defn}
    Define a generalized left split move (GLSM) to be the the translation class $M$ of an LDM $m$ with initial state $_a(m_i)_b$ and final state $_c(m_f)_d$ such that $c<a$ and $d\geq b$.  Similarly, a generalized right split move (GRSM) satisfies $c\leq a$ and $d>b$.  If $c<a$ and $d>b$, we refer to $M$ simply as a generalized split move (GSM).
\end{defn}

\begin{defn}
    We say an LDM $m$ (or its translation class $M$) is mass-increasing if $\chips{m_i}<\chips{m_i}$, mass decreasing if $\chips{m_i}>\chips{m_i}$, and as mass conservative if $\chips{m_i}=\chips{m_i}$.  We define mass non-increasing and non-decreasing moves in the usual way.
\end{defn}

\begin{prop}\label{tuple-term}
    A mass non-increasing game on the tuple (given by a set of rules) terminates iff for all possible initial states, there is a finite interval of indices which contains all possible subsequent game states, and there are no two sequences of moves which may have the effect of changing game states $S\rightarrow T$ and $T\rightarrow S$, respectively, for any game states $S, T$.  Equivalently, we may phrase the second condition as there being no way to achieve a game state twice during a game. 
\end{prop}

\begin{proof}
    If the game does not take place in a finite interval (and thus fails the first condition), then it has either unbounded leftmost or rightmost index.  Given that any move can only increase the leftmost or rightmost index by a finite amount, there must be an achievable infinite game.  Alternatively, if there is a game state which may be achieved twice during a game, then it may be achieved infinite times in an endless loop, so there is an infinite game.
    
    If, however, both conditions are met, then the game terminates.  Given any initial state, the stipulation that the resulting game takes place within a finite number of indices implies that there are a finite number of achievable game states - namely, all the ways of placing the initial chips inside the bounding interval.  Then if no state may be achieved twice, the number of these states bounds the length of the game, which thus terminates.
\end{proof}

Such a condition absolutely determines termination, but it may often prove difficult to use in practice.  That is, given a list of legal moves, does the game they define terminate?  In principal, one may have to check an infinite number of move strings to ensure that no state may be reached twice given any initial condition.  As such, we give now a stronger but more easily verifiable condition for game termination.

\begin{prop} \label{tuple-term2}
    Let $\mathscr{M}$ be a finite set of LDMs and translation classes of LDMs.  Suppose that all $M\in\mathscr{M}$ are mass non-increasing and either every mass-conservative move is a GRSM or they are all GLSMs.  Then any game $G(\mathscr{M})$ played with all moves from $\mathscr{M}$ terminates.
\end{prop}

\begin{proof}
    Since our game is mass non-increasing, we may only decrease the number of chips a finite number of times.  Thus, to prove termination, we need only prove that one may only conduct a finite number of consecutive mass conservative moves.  Thus, we assume from now on that all $M\in\mathscr{M}$ are mass conservative WLOG.
    
    For much the same reason as splits in the Generalized Bergman Game, GRSMs and GLSMs cannot increase the max gap size $\gap(S)$ of a state $S$ (see \Cref{lemma:niplrs-gap-change}).  By construction, there is a well-defined max gap size in the final state of any $M\in\mathscr{M}$.  Call this quantity $\gap(\mathscr{M})$.  Thus, the max gap size over the course of a game $G(\mathscr{M})$ with initial state $S_i$ cannot exceed $\gap(G) := \max(\gap(\mathscr{M}),\gap(S_i))$, the maximal max gap size over the entire game.  Then the width of any game state during the game cannot exceed $\chips{I_i} + (\chips{I_i}-1)g(G)$.
    
    Notice that none of our moves increase the left bound or decrease the right bound of any game state.  Then any left bound during the game must be left of the initial left bound, and similarly for the right.  Combining this fact with the finite width of any achieved game state, we have that the entire game $G(\mathscr{M})$ must take place entirely on a finite window of $w+1$ adjacent indices.
    
    Suppose our moves are all GRMSs.  We now re-index our game so that the leftmost index in the window of play is 0 and the rightmost is $w$.  Note that we may have to index some of the initial and final states of our moves as well if they are not translation invariant.  Let $\chips{\mathscr{M}}$ denote the largest number of chips used in any move $M\in\mathscr{M}$ (i.e., the largest number of chips in any initial or final state).  We define the following monovariant on a game-state $S$:
    \begin{align*}
        f(S) \coloneqq \sum_{i=0}^w S(i)(\chips{\mathscr{M}})^i
    \end{align*}
    Observe that weights increase to the right.  Since our moves are GRSMs, the legal move that would decrease this quantity the most is the one which begine with $\chips{\mathscr{M}}$ chips on index $w-1$ and ends with one chip on index $w$ and the remaining $\chips{\mathscr{M}}-1$ chips at index 0.  However, even in this case, $f(S)$ increases by
    \begin{align*}
        \chips{\mathscr{M}}-1 + (\chips{\mathscr{M}})^w - \chips{\mathscr{M}}(\chips{\mathscr{M}})^{w-1} = (\chips{\mathscr{M}})^w - 1 >0,
    \end{align*}
    assuming any nontrivial moves.  Thus $f(S)$ always increases when a move is made, and since it only takes on integer values, it must increase by at least one.  Further, it must lie between $n$ and $n (\chips{\mathscr{M}})^w$, where $n$ is the number of chips in play.  Therefore, only $n((\chips{\mathscr{M}})^w-1)$ moves can be made, and so the game terminates.  The argument for the case of GLSMs follows similarly, with an analogous monovariant $\sum_{i=0}^w S(i)(\chips{\mathscr{M}})^{w-i}$ which increases to the left.
\end{proof}

\begin{rmk}
    Interestingly, we may also generalize these results to games on the infinite $d$-dimensional lattice, for any finite $d$.  Instead of requiring moves to be GRSM or GLSM (which is a one-dimensional concept), we instead look at the projection of each move onto each axis, i.e., how each move changes any particular component of the chip distribution.  When we require all such projections of each move onto each axis to be GRSMs or GLSMs, termination follows.
\end{rmk}

\section{Explicit Bounds on Game Length}

Within this section we derive more explicit bounds than those given in \Cref{prop:niplrs-game-length-nonabsolute} and \Cref{thm:niplrs-game-length}. In particular, the first proposition now contains no asymptotics, and in the second we calculate the coefficient on the dominating term of $\chips{S}^2$. The ideas of each proof directly mirror those provided in \Cref{sec:niplrs-n2-term}, and so these serve merely as more careful and explicit bounds. 

\begin{prop}\label{prop:niplrs-game-length-nonabsolute-explicit}
    Let ${}_aS_b$ be some initial game state. Then we know that the Generalized Bergman Game for a \NIPLRS of depth at least two with coefficients $c_1 \geq \cdots \geq c_k \geq 1$ played on this initial game state terminates in at most
    \begin{align*}
        &\frac{k}{2(C_0 - 1)^2}\chips{S}^2 + \frac{2}{C_0 - 1}\chips{S}\log_\rho \chips{S} \\
        &+ \left(\frac{1}{C_0 - 1} + \frac{2A + (b - a)(R + 1)}{C_0 - 1} + \frac{2C_0R(b - a) + 2C_1 -  k}{2(C_0 - 1)^2}\right)\chips{S}.
    \end{align*}
    moves. For the definitions of these constants refer to \Cref{fig:table-constants}.
\end{prop}

\begin{proof}
    We use the same proof strategy as \Cref{prop:niplrs-game-length-nonabsolute}, and we take time to explicitly bound the following sum, which is the maximum increase in $\ind(T)$ over the course of the entire game:
    \[
        \sum_{r = 1}^{\frac{\chips{S}}{C_0 - 1}} \left[-k\frac{\chips{S}}{C_0 - 1} - \log_\rho \val(S) -A + rk - \sum_{j = 1}^k c_j\left(-k\frac{\chips{S}}{C_0 - 1} - \log_\rho \val(S) -A + rk - j\right)\right].
    \]
    To do so we bound $\log_\rho \val(S)$ above by $Rb + \log_\rho \chips{S}$ and below by $\log_\rho \chips{S}$ to see that $\ind(T)$ increases by at most
    \begin{align*}
        \sum_{r = 1}^{\frac{\chips{S}}{C_0 - 1}} \left[-k\frac{\chips{S}}{C_0 - 1} - \log_\rho \chips{S} -A + rk - \sum_{j = 1}^k c_j\left(-k\frac{\chips{S}}{C_0 - 1} - Rb - \log_\rho \chips{S} -A + rk - j\right)\right] .
    \end{align*}
    Evaluating this sum gives an increase of
    \begin{align*}
        \Delta^+\ind :&= \frac{k\chips{S}^2}{C_0 - 1}+ \frac{C_0Rb}{C_0 - 1}\chips{S} + \chips{S}\log_\rho \chips{S} + A \chips{S}- \frac{(C_0 - 1)k}{2}\frac{\chips{S}}{C_0 - 1}\left(\frac{\chips{S}}{C_0 - 1} + 1\right)  \\
        &= \frac{k}{2(C_0 - 1)}\chips{S}^2 + \chips{S}\log_\rho\chips{S} + \left(A + \frac{2C_0Rb + 2C_1 - k}{2(C_0 - 1)}\right)\chips{S} 
    \end{align*}
    in $\ind(T)$ over the course of the whole game. In order to not break the lower bound on $\ind(S_f)$ we have the following maximum total number of splits across the course of the entire game:
    \[
        \frac{1}{C_0 - 1} \cdot \left[\Delta^+\ind + Rb\chips{S} + \chips{S}\log_\rho \chips{S} + A\chips{S} + b\chips{S}\right].
    \]
    Then because there are at most $\frac{\chips{S}}{C_0 - 1}$ combines throughout the course of the game, we can simply add together these two contributions to find the upper bound on the total number of moves in the game indicated in the proposition.
\end{proof}

\begin{prop}\label{prop:niplrs-game-length-explicit}
    Given any initial state $S$, the Generalized Bergman Game terminates in at most
    \[
        \left[\frac{(R + 1)(C_0 - 1) + C_0R}{(C_0 - 1)^2}\left(\frac{2k}{C_0 - 1} + k + 1\right) + \frac{k}{2(C_0 - 1)^2}\right]\chips{S}^2 + O(\chips{S}\log \chips{S})
    \]
    moves. For the definitions of these constants refer to \Cref{fig:table-constants}.
\end{prop}

\begin{proof}
    Using \Cref{prop:niplrs-game-length-nonabsolute-explicit} along with the proof strategy of \Cref{thm:niplrs-game-length} we see that the number of moves is bounded above by the following for some constants $X, Y$ given in the statement of the proposition in terms of the coefficients $c_1, \ldots, c_k$:
    \begin{align*}
        \sum_i \left[\frac{k}{2(C_0 - 1)^2}\chips{S^i}^2 + X\chips{S^i}\log_\rho \chips{S^i} + Y\chips{S^i} + \frac{(R + 1)(C_0 - 1) + C_0R}{(C_0 - 1)^2}\len(S^i)\chips{S^i}\right].
    \end{align*}
    Using the fact that $\sum_i \chips{S^i} = \chips{S}$ we may then bound each piece of this easily:
    \begin{align*}
        \sum_i \chips{S^i}^2 &\leq \left(\sum_i \chips{S^i}\right)^2 = \chips{S}^2 \\ \\
        \sum_i \chips{S^i}\ceil{\log_\rho \chips{S^i}} &\leq \log_\rho \chips{S} \cdot \sum_i \chips{S^i} = \chips{S}\log_\rho \chips{S} \\ \\
        \sum_i \len(S^i)\chips{S^i} &\leq \chips{S}\sum_i \len(S^i) \leq \left(\frac{2k}{C_0 - 1} + k + 1\right)\chips{S}^2
    \end{align*}
    In total, this provides a bound on the length of a game $G$ with initial state $S$ by 
    \[
        \left[\frac{(R + 1)(C_0 - 1) + C_0R}{(C_0 - 1)^2}\left(\frac{2k}{C_0 - 1} + k + 1\right) + \frac{k}{2(C_0 - 1)^2}\right]\chips{S}^2 + O(\chips{S}\log \chips{S})
    \]
    moves, completing the proof.
\end{proof}

\printbibliography

\end{document}